\theoremstyle{plain}
\newtheorem{thm}{Theorem}[section]
\newtheorem{lemma}[thm]{Lemma}
\newtheorem{prop}[thm]{Proposition}
\newtheorem{cor}[thm]{Corollary}
\theoremstyle{definition}
\newtheorem*{defn}{Definition}
\theoremstyle{example}
\newtheorem{example}{Example}
\theoremstyle{remark}
\numberwithin{equation}{section}
\newcommand{\eqdef}{\overset{\mbox{\tiny def}}{=}}
\def\R{\mathbb R}
\def\Z{\mathbb Z}
\def\E{\mathbb{E}}
\def\ZZ{\mathbb{Z}}
\def\cD{\mathcal{D}}
\def\cT{\mathcal{T}}
\def\RR{\mathbb{R}}
\title{An asymptotic relationship between coupling methods for stochastically modeled population processes}
\author{David F. Anderson$^{1}$ and Masanori Koyama$^{2}$}
\begin{document}

\maketitle

\begin{abstract}
This paper is concerned with elucidating a relationship between two common coupling methods for the continuous time Markov chain models utilized in the cell biology literature.  The couplings considered here are primarily used in a computational framework by providing reductions in variance for different Monte Carlo estimators, thereby allowing for significantly more accurate results for a fixed amount of computational time.  Common applications of the couplings include the estimation of parametric sensitivities via finite difference methods and the estimation of expectations via multi-level Monte Carlo algorithms.  While a number of coupling strategies have been proposed for the models considered here, and a number of articles have experimentally compared the different strategies, to date there has been no mathematical analysis describing the connections between them.   Such analyses are critical in order to determine the best use for each.  In the current paper, we show a connection between the common reaction path (CRP) method and the split coupling (SC) method, which is termed coupled finite differences (CFD) in the parametric sensitivities literature.  In particular, we show that the two couplings are both limits of a third coupling strategy we call the ``local-CRP'' coupling, with the split coupling method arising as a key parameter goes to infinity, and the common reaction path coupling arising as the same parameter goes to zero.  The analysis helps explain why the split coupling method  often provides a lower variance than does the common reaction path method, a fact previously  shown  experimentally.

\end{abstract}

\footnotetext[1]{Department of Mathematics, University of
  Wisconsin, Madison, Wi. 53706, anderson@math.wisc.edu.}

\footnotetext[2]{Department of Systems Science, Graduate School of Informatics, Kyoto University, Integrated Systems Biology Lab, koyama.masanori@gmail.com. Corresponding Author.}

  \footnotetext{AMS 2000 subject classifications: Primary 60H35, 65C99; Secondary 92C40}

\section{Introduction}
\label{sec:intro}
Models of biochemical reaction networks with stochastic dynamics have become increasingly popular in the science literature over the previous fifteen years where they are often studied via computational methods and, in particular, Monte Carlo methods.  These computational methods tend to be extremely expensive and time-consuming without the use of variance reduction techniques.  One of the most common ways to achieve a large reduction of variance is to couple two relevant processes in order to increase their covariance.  There are three main couplings found in the relevant literature: (i)  the use of common random numbers (CRN), (ii) the common reaction path (CRP) coupling \cite{Khammash2010}, and (iii) a \textit{split coupling} (SC) method  termed  coupled finite differences in the setting of parametric sensitivities \cite{AndCFD2012,AndHigham2012}.  It has been observed in the literature that both the CRP and SC couplings are far superior to the CRN coupling in terms of variance reduction \cite{AndCFD2012,Khammash2010,Srivastava2013}.  It has also been observed through examples that the SC method tends to perform much better than the CRP method, though some exceptions  exist \cite{AndCFD2012,Srivastava2013}.  To the best of the authors' knowledge there has to date been no analytical work on understanding the connections between these two couplings.  In the present paper we prove that both the CRP and SC couplings arise naturally as different limits of a third family of couplings we term the \textit{local-CRP} coupling.  In particular, the CRP coupling arises as a limit in which the local-CRP coupling is as loosely coupled as possible, whereas the SC coupling arises from a limit of the local-CRP ``recoupling'' as often as possible.  Such an analysis sheds light on why the split coupling often provides a lower variance than does the CRP coupling.

The outline for the remainder of the paper is as follows.  In Section \ref{sec:MathModel}, we formally present the mathematical models considered in this paper, together with a brief description of the  computational methods that serve as motivation for the present work.  In Section \ref{sec:couplings}, we present the different coupling strategies for the models presented in Section \ref{sec:MathModel}.   In Section \ref{sec:Analysis}, we state and prove our main results.  In Section \ref{sec:examples}, we provide numerical examples demonstrating our main results, and in Section \ref{sec:discussion} we conclude with some brief remarks.

\section{Mathematical model and motivating computational methods}
\label{sec:MathModel}

Motivated by models in biochemistry, we consider continuous time
Markov chain models in $\Z^d$, in which the $i$th component of the
process typically represents the number of molecules of ``species''
$i$ present in the system.  The transitions of the chain are specified by vectors, $\zeta_k
\in \Z^d$, for $k \in \{1,\dots,R\}$ with $R < \infty$, determining
the net change in the chain due to the occurrence of a single
``reaction,'' and by the intensity functions $\lambda_k: \Z^d \to
\R_{\ge 0}$, which determine the rate at which the different reactions
are occurring.\footnote{Intensity functions are termed ``propensity''
  functions in the biochemistry literature.}  Specifically, letting $N_k(t)$ be the number of times transition $k \in \{1,\dots,R\}$ has occurred by time $t\ge 0$, we will consider the continuous time Markov chain $X$ satisfying the equation
\begin{equation*}
	X(t) = X(0) + \sum_{k = 1}^R N_k(t) \zeta_k,
\end{equation*}
where $N_k$ is a counting process with local intensity function $\lambda_k$.   That is, $\{N_k\}$ are the counting processes for which the processes
\[
	N_k(t) - \int_0^t \lambda_k(X(s))ds
\]
are local martingales.  
One useful representation for the counting processes $N_k(t)$ is via time-changed unit-rate Poisson processes \cite{Anderson2007a,AndKurtz2011,Kurtz86,KurtzPop81},
\[
	N_k(t) = Y_k\left( \int_0^t \lambda_k (X(s)) ds\right),
\]
yielding the stochastic equation
\begin{equation}\label{eq:RTC_X}
	X(t) = X(0) + \sum_{k = 1}^R Y_k\left( \int_0^t \lambda_k (X(s)) ds\right)\zeta_k
\end{equation}
where $\{Y_k\}_{k = 1}^R$ is a collection of independent unit-rate Poisson processes.
Note that $X$ can also be specified by its infinitesimal generator, 
\begin{equation}\label{eq:gen_A}
	(Af)(x) = \sum_{k = 1}^R \lambda_k(x) (f(x+\zeta_k) - f(x)),
\end{equation}
where $f$ is any bounded function with compact support.

We denote $Z$ as the process on $\Z^d$ with the same transition directions $\{\zeta_k\}$ as $X$, but with intensities $\widetilde \lambda_k:\Z^d \to \R_{\ge 0}$.  That is, $Z$ is the Markov process with infinitesimal generator
\begin{equation}\label{eq:gen_B}
	(Bf) (x) = \sum_{k=1}^R \widetilde \lambda_k(x)(f(x+\zeta_k) - f(x)),
\end{equation}
and which satisfies the stochastic equation
\begin{equation}\label{eq:RTC_Z}
	Z(t) = Z(0) + \sum_{k = 1}^R Y_k\left( \int_0^t \widetilde \lambda_k (Z(s)) ds\right)\zeta_k,
\end{equation}
where $\{Y_k\}_{k = 1}^R$ is a collection of independent unit-rate Poisson processes.  In the remainder of the paper, we consider different ways to couple $X$ and $Z$ and provide an asymptotic relationship between two of the couplings.

%
%

\subsection{Motivating computational methods}

We briefly present two computational methods that serve as the motivation for the analysis of the different coupling strategies: finite difference methods for parametric sensitivity analysis and multi-level Monte Carlo for the estimation of expectations.

\subsubsection{Parametric sensitivity analysis}

Suppose that  $\{X^\theta\} $ is a  parametric family of processes about $\theta$ on
a state space $E$, and $f:E \to \RR$ is some statistic of interest.  For example, $f(X(t)) = X_i(t)$ may provide the abundance of species $i$ at time $t \ge 0$.  It is common to wish to evaluate 
\begin{align}
\begin{split}
\frac{d}{d\theta} \E [f(X^\theta(t))] \approx \frac{\E[f(X^{\theta+ h}(t))] -  \E[f(X^{\theta}(t))]}{h}
\end{split} \label{eq:motivation_Derivs} 
\end{align}
 as a measurement of the  sensitivity of $\E [f(X^\theta(t))]$ with respect to $\theta$.  Such a strategy is usually called a \textit{finite difference} method.
We would  like to empirically evaluate the right-hand side of \eqref{eq:motivation_Derivs} in as efficient a manner as possible.  By coupling the processes $(X^{\theta + h}, X^{\theta})$, we may evaluate 
\[
h^{-1}\E[f(X^{\theta+ h}(t)) -  f(X^{\theta}(t))],
\]
with the magnitude of $\textsf{Var}(f(X^{\theta+ h}(t)) -  f(X^{\theta}(t)))$ determining the quality of the coupling.  In particular, we wish to minimize $\textsf{Var}(f(X^{\theta+ h}(t)) -  f(X^{\theta}(t)))$ without greatly increasing the computational cost of producing realizations of the coupled processes $(X^{\theta + h}, X^{\theta})$.
We explicitly note that in the setting of the previous section, we have 
$$\lambda_k(\cdot) = \eta_k(\theta, \cdot) , ~~~~ \widetilde \lambda_k(\cdot) =
\eta_k(\theta + h, \cdot),$$
where for each $k$,  $\{\eta_k(\theta, \cdot) : \RR^d \to \RR_{\ge 0}\}$  is a
parametric family of functions about $\theta$.  In this case, we have 
\[
	X = X^{\theta}, \quad \text{and} \quad  Z = X^{\theta+h}.
\]
As mentioned in Section \ref{sec:intro}, there has been a large amount of work in the literature on developing good coupling strategies for the estimation of parametric sensitivities via finite differences \eqref{eq:motivation_Derivs}; see, for example, \cite{AndCFD2012,AndSkubak,Khammash2010,Srivastava2013}.  To the best of the authors' knowledge there has been no mathematical analysis detailing the connections between the different couplings used, though see the discussion in Section \ref{sec:discussion} for details pertaining to a recent work by Arampatzis and Katsoulakis \cite{Markos}.

\subsection{Multi-level Monte Carlo} 

In \cite{Giles2008}, Mike Giles introduced the multi-level Monte Carlo (MLMC) method for the approximation of expectations of diffusion processes.  Specifically, if $X$ is the diffusion process of interest and $\{Z_\ell\}$ are a family of  approximations to $X$, with higher values of $\ell$ corresponding to better approximations, then we observe that for any function $f$ of interest,
\begin{align*}
	\E [f(X(t))] &\approx \E [f(Z_L) ]= \sum_{\ell = 1}^L \E[ f(Z_{\ell}(t)) - f(Z_{\ell-1}(t))] + \E [ f(Z_0)],
\end{align*}
where $L$ is chosen large enough so that $\left| \E[ f(X(t))] - \E [f(Z_L(t))]\right|$ is below some target accuracy.  It is typical to choose $Z_\ell$ to be the process produced by Euler-Maruyama with a step size of $M^{-\ell}$ for some $M \in \{2,3,\dots,7\}$.  If each term $f(Z_{\ell}(t)) - f(Z_{\ell-1}(t))$ is tightly coupled, then the variance of each of the intermediate estimators will be low, thereby moving the computational cost to the lowest level, $\E[ f(Z_0)]$, which can be estimated quickly via Euler-Maruyama with large time-steps. 

In \cite{AndHigham2012}, Anderson and Higham extended the multi-level Monte Carlo method to the setting of this paper by utilizing the split coupling detailed in Section \ref{sec:couplings}.  They further noted that an unbiased estimator can be produced for jump models by coupling the exact process $X$ with the approximate process with the finest time discretizatoin
\begin{align*}
	\E [ f(X(t)) ] =\E[ f(X(t)) - f(Z_L(t))] + \sum_{\ell = 1}^L \E[ f(Z_{\ell}(t)) - f(Z_{\ell-1}(t))] + \E[ f(Z_0)],
\end{align*}
where, again, it is the quality of the coupling at each level that determines the overall quality of the method.  

We point out that in the diffusive case the most natural coupling is to re-use the driving Brownian path for each of the coupled processes.  This is relatively easy to do via the Brownian bridge.  However, as will be noted in the next section, there are multiple natural couplings to choose from in the context of jump processes with state dependent intensity functions, and different choices lead to computational methods with vastly different computational complexities and, hence, runtimes.

\section{Different Couplings} 
\label{sec:couplings}

We return to the notation introduced at the beginning of Section
\ref{sec:MathModel} and focus our discussion on ways to couple $X$
and $Z$ with intensities $\lambda_k$ and $\widetilde \lambda_k$,
respectively. 

 \subsection{Split coupling}
We will begin by introducing the split coupling (SC), which first appeared as an analytic tool in \cite{Kurtz82} and later appeared in the context of computational methods in \cite{AndCFD2012,AndersonGangulyKurtz,AndHigham2012,AHS13,Markos}.  Let $a\wedge b \eqdef \min\{a,b\}$, and let
$\mathcal U$ and $\mathcal V$ be any c\`adl\`ag processes on $\RR^d$.  Then for each $k \in \{1,\dots,R\}$ we define the operators $r_{1k},r_{2k},$ and $r_{3k}$ via
\begin{align}
\begin{split}
r_{1k}(\lambda_k, \widetilde \lambda_k, \mathcal U, \mathcal V)(s) &\eqdef \lambda_k(\mathcal U(s)) \wedge \widetilde \lambda_k( \mathcal V(s))\\
r_{2k}(\lambda_k, \widetilde \lambda_k, \mathcal U, \mathcal V)(s) &\eqdef \lambda_k(\mathcal U(s)) -r_{1k}(\lambda_k, \widetilde \lambda_k, \mathcal U, \mathcal V)(s)\\
r_{3k}(\lambda_k, \widetilde \lambda_k, \mathcal U, \mathcal V)(s) &\eqdef \widetilde \lambda_k(\mathcal V(s)) - r_{1k}(\lambda_k, \widetilde \lambda_k, \mathcal U, \mathcal V)(s).
\end{split} \label{couple_rates}
\end{align}
The split coupling of the processes $X$ and $Z$ is then given by
\begin{align}
\begin{split}
X_{\text{sc}}(t) = X(0) + &\sum_{k=1}^R\Bigg\{ Y_{1k} \left(
 \int_0^t r_{1k}(\lambda_k, \widetilde \lambda_k, X_{\text{sc}}, Z_{\text{sc}})(s)  ds \right)  \\
&+   Y_{2k}
\left( \int_0^t r_{2k}(\lambda_k, \widetilde \lambda_k, X_{\text{sc}}, Z_{\text{sc}})(s)  ds \right)
\Bigg\}  \zeta_k \\
Z_{\text{sc}}(t) = Z(0) + &\sum_{k=1}^R \Bigg\{ Y_{1k} \left( \int_0^t
  r_{1k}(\lambda_k, \widetilde \lambda_k, X_{\text{sc}}, Z_{\text{sc}})(s)  ds
\right) \\
& +  Y_{3k} \left( \int_0^t   r_{3k}(\lambda_k, \widetilde \lambda_k, X_{\text{sc}}, Z_{\text{sc}})(s) ds  \right)  \Bigg\} \zeta_k,
\end{split} \label{eq:split_coupling}
\end{align}
where $\{Y_{1k}\}_{k = 1}^R\cup \{Y_{2k}\}_{k = 1}^R\cup \{Y_{3k}\}_{k = 1}^R$ are mutually independent unit-rate Poisson processes.  Note that $X_{\text{sc}}$ and $Z_{\text{sc}}$ share the family of counting processes determined by the Poisson processes $Y_{1k}$. Further note that $(X,Z)$ satisfying the stochastic equation \eqref{eq:split_coupling} is simply a continuous time Markov chain on $\Z^d\times \Z^d$ with infinitesimal generator
\begin{align*}
	(\mathcal L_{\text{sc}}g)(x,z) &= \sum_{k = 1}^R \min\{\lambda_k(x),\widetilde \lambda_k(z)\} ( g(x+\zeta_k,z+\zeta_k) - g(x,z))\\
	&\hspace{.1in} + \sum_{k = 1}^R(\lambda_k(x) -  \min\{\lambda_k(x),\widetilde \lambda_k(z)\} ) ( g(x+\zeta_k,z) - g(x,z))\\
	&\hspace{.1in}+ \sum_{k = 1}^R(\widetilde\lambda_k(z) -  \min\{\lambda_k(x),\widetilde \lambda_k(z)\} ) ( g(x,z+\zeta_k) - g(x,z)),
\end{align*}
where $g: \Z^d\times \Z^d\to \R$ is any bounded function with compact support.

\subsection{Common random numbers}

In the common random numbers (CRN) coupling, we simply simulate the embedded discrete time Markov chain for each process concurrently with the exponential holding time for each transition.  The processes $X$ and $Z$ are then coupled by using (i) the same stream of random variables for the generation of the embedded discrete time chain, and (ii) the same stream of random variables for the exponential holding times.  

More explicitly, let $\{U_i\}_{i=0}^\infty$
be a sequence of uniform random variables over the interval $[0,1]$, and let $\eta: \R_{\ge 0}^R \times [0,1]\to \{\zeta_1,\dots,\zeta_R\}$ be defined via 
\[
	\eta (c_1, ...., c_R, u)  = \zeta_k  ~~~ \textrm{ if } ~~~ \frac{{\sum_{i=1}^{k-1} c_i}}{\sum_{i=1}^R c_i}  \leq
  u  < \frac{{\sum_{i=1}^k c_i}}{\sum_{i=1}^R c_i},
 \]
which is a categorical random variable parametrized by $c_1,...., c_R$.  
Also, let us denote
\begin{equation}
	\lambda_0(x) = \sum_{k=1}^R \lambda_k(x) \quad \text{and} \quad  \widetilde \lambda_0(x) = \sum_{k=1}^R \widetilde \lambda_k(x).
\end{equation}
Then for a common unit-rate poisson process $Y$,  which will determine the exponential holding times, we consider the following system: 
\begin{align}
\begin{split}
R_X(t) &= Y\left( \int_0^t \lambda_0(X_{\text{crn}}(s)) ds \right)  \\
R_Z(t) &= Y\left( \int_0^t \widetilde \lambda_0(Z_{\text{crn}}(s)) ds \right) \\ 
X_{\text{crn}}(t) &= X_{\text{crn}}(0) + \int_0^t \eta (\lambda_1(X_{\text{crn}}(s-)),\dots, \lambda_R(X_{\text{crn}}(s-)), U_{R_X(s-)} )   dR_X(s)  \\ 
Z_{\text{crn}}(t) &= Z_{\text{crn}}(0) + \int_0^t \eta (\widetilde \lambda_1(Z_{\text{crn}}(s-)),\dots,\widetilde\lambda_R(Z_{\text{crn}}(s-)),
U_{R_Z(s-)} )   dR_Z(s),
\end{split}
\end{align}
where we note that the processes shared not just the Poisson process $Y$, but also the sequence of uniform $[0,1]$ random variables $\{U_i\}_{i = 0}^\infty$.
The solution to this system exists and is unique by construction \cite{AndKurtz2011,Gill76,Gill77}.  We  note that while the
representations are different, the marginal processes $X_{\text{crn}}$ and $X_{\text{sc}}$ have the same distribution, while the coupled processes $(X_{\text{crn}}, Z_{\text{crn}})$  and $(X_{\text{sc}}, Z_{\text{sc}})$  obviously do not.

\subsection{Common reaction path coupling and the local common reaction path coupling}

The common reaction path (CRP) coupling arises by simply noting that we may couple the processes \eqref{eq:RTC_X} and \eqref{eq:RTC_Z} via the Poisson processes $\{Y_k\}$.  That is, in the CRP coupling $(X_{\text{crp}},Z_{\text{crp}})$ satisfies 
\begin{align}
\begin{split}
X_{\text{crp}}(t) = X_{\text{crp}}(0) + \sum_{k=1}^R Y_{k} \left(
 \int_0^t \lambda_k(X_{\text{crp}}(s)) ds \right)  \zeta_k \\
Z_{\text{crp}}(t) = Z_{\text{crp}}(0) + \sum_{k=1}^RY_{k} \left( \int_0^t \widetilde
  \lambda_k(Z_{\text{crp}}(s) ) ds
\right)\zeta_k,
\end{split} \label{CRP}
\end{align}
where the $Y_k$ are  independent unit-rate Poisson processes. 

Numerical experiments have shown that this coupling is significantly tighter than the CRN coupling, in that it produces a lower variance between the coupled processes, for many situations \cite{AndCFD2012,Khammash2010,Srivastava2013}.  However, the variance between the processes often  increases substantially as $t$ grows.  In fact, the variance of the relevant estimators oftentimes approaches that of independent realizations of $X$ and $Z$ as $t$ grows towards infinity \cite{AndCFD2012,Srivastava2013}.
 We postulate that the variance of the CRP coupling increases in this manner because of its
 inability to fix  a ``decoupling'' once it occurs.  To understand this heuristically, suppose that  given $X_{\text{crp}}(t_0)$ and $Z_{\text{crp}}(t_0)$ for some $t_0>0$ we also have
\begin{align}
 \int_0^{t_0} \widetilde
  \lambda_k(Z_{\text{crp}}(s)) ds  \ll  \int_0^{t_0} \lambda_k(X_{\text{crp}}(s))  ds    \label{trap} 
\end{align}
for all $k$.  Then  the time and type of the next jump of $X_{\text{crp}}$ is nearly uncorrelated from the time and type of the next jump of $Z_{\text{crp}}$.  This is true even if $X_{\text{crp}}(t_0)$ and $Z_{\text{crp}}(t_0)$ are very close or even equal.  
 This problem does not occur with the split coupling since the next jump times of
$X$ and $Z$ are always correlated via the  counting processes with  
intensity 
\[
	\lambda_k(X_{\text{sc}}(s)) \wedge \widetilde \lambda_k(Z_{\text{sc}}(s)). 
\]   

 
 The above discussion motivates us to consider the following modification to the CRP coupling.  We discretize $[0,T]$ into multiple subintervals.  For each such subinterval we generate the coupled processes using a new set of independent unit-rate Poisson processes and initial conditions given by the values of the processes at the terminal time of the previous subinterval.   Note that if the processes $X_{\text{crp}}$ and $Z_{\text{crp}}$ are equal to each other at a transition between subintervals, then  the processes will have recoupled.
We will elaborate on this strategy. 
Let $\pi = \{0 = s_0 < s_1  \cdots < s_n= T\} $  be a partition of
$[0,T]$. Also let $\{Y_{km}: k = 1,\dots,R, m =0,1,2,\dots~ \}$ be a
set of independent, unit-rate Poisson processes.  
Then we define the local-CRP coupling over $[0, T]$ with  respect to $\pi$  as the solution of 

\begin{align}
\begin{split}
 X_{\text{crp}}^\pi(t)  &= X(0) + \sum_{k =1}^R \sum_{m=0}^{\infty}  Y_{km} \left(
  \int_{t \wedge  s_m}^{t \wedge s_{m+1}} \lambda_k(X^{\pi}_{\text{crp}}(s) ) ds  \right)
\zeta_k \\
 Z_{\text{crp}}^\pi(t) &= Z(0)+ \sum_{k =1}^R \sum_{m=0}^{\infty}  Y_{km} \left(
   \int_{t \wedge  s_m}^{t \wedge s_{m+1}}  \widetilde \lambda_k(Z^{\pi}_{\text{crp}}(s)) ds  \right)
\zeta_k .
\end{split} \label{localCRP}
\end{align}

We remark that, irrespective of $\pi$, the marginal distribution of
$X_{\text{crp}}^\pi$ is the same as that of $X$, our process of interest, and the same goes for $Z^{\pi}_{\text{crp}}$ and $Z$.  
Also, when $\pi$ is a trivial partition with $n =1$, the coupling  \eqref{localCRP}
is precisely the CRP coupling of \eqref{CRP}.  In the next section, we
will consider the limit of the the family of local-CRP couplings as  $n \to \infty$ and prove that under reasonable conditions the coupled processes converge weakly to the processes coupled via the split coupling \eqref{eq:split_coupling}.

\section{Limit of the local-CRP coupling} 
\label{sec:Analysis}

We begin this section by specifying some notation.  First, when $X$ and $Z$ are stochastic processes built on the probability space $(\Omega,\mathcal F,P)$, we denote by $X(s,\omega)$ the process $X$  evaluated at time $s$ for a given choice $\omega \in \Omega$.  Further, by $(X, Z)(s,\omega)$ we mean $( X(s, \omega), Z(s,\omega)) $,  a vector of random
variables evaluated at time $s$. As is usual, we will often omit $\omega$ from the notation when no confusion is expected. Finally, 
when $\mathbf{t} = (t_1,\dots,t_K)$ is a $K$ dimensional vector of times points, we denote
\[
	X(\mathbf{t})  =  [X(t_1), \dots, X(t_K)].
\]
Also, throughout the section, we assume that $X(0) = Z(0)$.  

\subsection{Weak convergence of finite dimensional distributions}

We will first articulate what we mean by taking $n\to \infty$ in
the context of the last section.  
\begin{defn}
Let $\pi_n = \{0 = s_0 \le s_1  \le \cdots \le s_n= T\} $ be a partition of
$[0,T]$. For $m \in \{0,\dots,n-1\}$  let
$$\Delta_m \pi_n= s_{m+1} - s_m. $$ 
The mesh of $\pi_n$ is defined as 
\[
	\text{mesh}(\pi_n) \eqdef \max\{ \Delta_m \pi_n : m \in \{0,\dots,n-1\}\}.
\]
\end{defn}

Supposing that $\text{mesh}(\pi_n) \to  0$ as $n \to \infty$, the limit of  interest to us is the weak limit of $(X^{\pi_n}_{\text{crp}},Z^{\pi_n}_{\text{crp}})$ as $n \to \infty$.   We begin with Proposition \ref{Theorem} showing the weak convergence of $X_{\text{crp}}^{\pi_n}$ to $X_{\text{sc}}$ over finite coordinates as $n\to \infty$.  In Subsection \ref{sec:weak_conv} we prove weak convergence at the process level.

\begin{prop}
Suppose that neither of the nominal processes $X,Z$ are explosive and let $(X_{\text{sc}}(t), Z_{\text{sc}}(t))$ be coupled in the way of \eqref{eq:split_coupling}.  Let 
\[
	\pi_n = \{0 = s_0 \leq s_1 \leq \cdots \leq s_{n} =T  \}
\]
 be a sequence of partitions such that $\text{mesh} (\pi_n) \to 0$, as $n \to \infty$, and for each $n$ let $(X^{\pi_n}_{\text{crp}}(t), Z^{\pi_n}_{\text{crp}}(t))$ be coupled in the way of \eqref{localCRP}.  
  Then for any $K \in \ZZ_{\ge 0}$ and 
$\mathbf{t} \in [0,T]^K$,  and any bounded Lipshitz $f: (\RR^d \times \RR^d)^K \to \RR$,  
\noindent  
\[
	\E[f((X^{\pi_n}_{\text{crp}} ,  Z^{\pi_n}_{\text{crp}})( \mathbf{t}))] \rightarrow \E[f((X_{\text{sc}}, Z_{\text{sc}}) (\mathbf{t}))], \quad \text{as } n \to \infty.
\]
  \label{Theorem} 
\end{prop}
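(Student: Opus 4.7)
The plan is to compare semigroups on the $\mathbb{Z}^d\times\mathbb{Z}^d$ chain. Restricted to the partition points, $(X^{\pi_n}_{\text{crp}}(s_m),Z^{\pi_n}_{\text{crp}}(s_m))_{m=0}^{n}$ is a time-inhomogeneous Markov chain whose one-step transition kernel from $s_m$ to $s_{m+1}$ is the CRP transition kernel $P^{\text{crp}}_{\Delta_m\pi_n}$, because the local-CRP uses a fresh, independent family of unit-rate Poisson processes on each subinterval. Writing $S^{\text{sc}}_t$ for the semigroup of the split-coupled Markov chain with generator $\mathcal L_{\text{sc}}$, the goal reduces to showing that the composition $P^{\text{crp}}_{\Delta_{n-1}\pi_n}\cdots P^{\text{crp}}_{\Delta_0\pi_n}$ converges, acting on bounded test functions, to $S^{\text{sc}}_T$ as $\text{mesh}(\pi_n)\to 0$.

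The key lemma is a one-step expansion: for bounded intensities and any bounded $g:\mathbb{Z}^d\times\mathbb{Z}^d\to\mathbb{R}$,
$$P^{\text{crp}}_h g(x,z) = g(x,z) + h\,\mathcal L_{\text{sc}} g(x,z) + R_h(x,z), \qquad |R_h(x,z)|\le C h^2,$$
where $C$ depends on $\|g\|_\infty$ and on upper bounds for the intensities at the relevant states. To obtain it, fix $(x,z)$ and condition on the firings of $\{Y_k\}$ during $[0,h]$. Set $A_k=\int_0^h\lambda_k(X_{\text{crp}}(s))\,ds$ and $B_k=\int_0^h\widetilde\lambda_k(Z_{\text{crp}}(s))\,ds$, so that $A_k=\lambda_k(x)h+O(h^2)$ and $B_k=\widetilde\lambda_k(z)h+O(h^2)$ prior to any jump. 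With probability $1-O(h^2)$ at most one $Y_k$ fires in total. A firing of $Y_k$ inside $[0,\,A_k\wedge B_k]$ is observed simultaneously by both processes, producing a paired jump to $(x+\zeta_k,z+\zeta_k)$ with probability $(\lambda_k(x)\wedge\widetilde\lambda_k(z))\,h+O(h^2)$; a firing in the residual interval $(A_k\wedge B_k,\,A_k\vee B_k]$ is seen only by whichever process has the larger instantaneous intensity, producing an unpaired jump with probability $|\lambda_k(x)-\widetilde\lambda_k(z)|\,h+O(h^2)$. Summing these contributions, weighted by the corresponding increments of $g$, reproduces exactly the three sums defining $\mathcal L_{\text{sc}} g$.

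Given the expansion, a standard Trotter-style telescoping
$$P^{\text{crp}}_{\Delta_{n-1}\pi_n}\cdots P^{\text{crp}}_{\Delta_0\pi_n}g - S^{\text{sc}}_T g = \sum_{m=0}^{n-1}\Bigl(\prod_{j>m}P^{\text{crp}}_{\Delta_j\pi_n}\Bigr)\bigl(P^{\text{crp}}_{\Delta_m\pi_n}-S^{\text{sc}}_{\Delta_m\pi_n}\bigr)\Bigl(\prod_{j<m}S^{\text{sc}}_{\Delta_j\pi_n}\Bigr)g,$$
combined with $\|P^{\text{crp}}_h - S^{\text{sc}}_h\|_\infty\le C h^2$ and the fact that both kernels are $L^\infty$-contractions, yields a bound of order $C\,T\,\text{mesh}(\pi_n)$, so convergence at $t=T$ follows. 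Unbounded intensities are handled by a standard localization at $\tau_M=\inf\{t:|X^{\pi_n}_{\text{crp}}(t)|\vee|Z^{\pi_n}_{\text{crp}}(t)|\ge M\}$, using the marginal identities $X^{\pi_n}_{\text{crp}}\eqdist X$ and $Z^{\pi_n}_{\text{crp}}\eqdist Z$ together with non-explosiveness to control $P(\tau_M\le T)$ uniformly in $n$. For finite-dimensional convergence at arbitrary $\mathbf{t}\in[0,T]^K$, each $t_i$ is approximated by a partition point within distance $\text{mesh}(\pi_n)$; the expected number of jumps in such a subinterval is $O(\text{mesh}(\pi_n))$, and the Lipschitz/boundedness of $f$ reduces the claim to convergence at partition points, which is then obtained by iterating the one-time convergence via the Markov property of the chain at partition points. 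The main obstacle is the one-step expansion in the second paragraph: the careful bookkeeping of paired versus unpaired firings of a shared Poisson process is precisely where the generator $\mathcal L_{\text{sc}}$ emerges organically from the CRP dynamics, and everything downstream is routine by comparison.
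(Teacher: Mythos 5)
Your argument is correct in outline, but it takes a genuinely different route from the paper. The paper's proof is pathwise: it constructs the split-coupling's Poisson processes $Y^n_{ikm}$ explicitly as functions of the local-CRP's Poisson processes $Y^n_{km}$ (splitting each $Y^n_{km}$ at the random internal times $\cT_{ikm}$ and patching with auxiliary processes), and then shows via Lemmas \ref{LetmeGo}--\ref{Proper} that on the high-probability event that each subinterval sees at most one firing (and none in subintervals containing the $t_i$), the two coupled pairs are literally equal at the partition points; a Lipschitz/Portmanteau step and a localization through $B_{M,n}$ finish the proof. You instead work analytically: you observe that the local-CRP pair restricted to partition points is a time-inhomogeneous Markov chain whose one-step kernel is the CRP kernel, prove the expansion $P^{\text{crp}}_h g = g + h\,\mathcal L_{\text{sc}} g + O(h^2\|g\|_\infty)$ for bounded intensities (your bookkeeping of paired firings in $[0,\lambda_k(x)h\wedge\widetilde\lambda_k(z)h]$ versus unpaired firings in the residual range is exactly the right computation and is where $\mathcal L_{\text{sc}}$ emerges), and then conclude by Lie--Trotter telescoping, the $L^\infty$-contraction property of both kernels, localization via the marginal identities $X^{\pi_n}_{\text{crp}}\eqdist X$, $Z^{\pi_n}_{\text{crp}}\eqdist Z$ and non-explosiveness, and a small-interval jump estimate to move from partition points to arbitrary $\mathbf t$. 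The two proofs buy different things: the paper's construction yields the stronger statement that the processes agree at partition points with probability tending to one (pathwise closeness, which is also what makes inequality \eqref{Couple} available), whereas your semigroup comparison is more standard machinery, gives a quantitative rate of order $\text{mesh}(\pi_n)$ in the bounded-intensity case, and extends with no extra effort to the generalized couplings of Theorem \ref{general}. Two places where your sketch is terse but repairable: the multi-time convergence should be carried out by iterating the kernel comparison on nested conditional expectations (uniformity of the $O(h^2)$ bound in $\|g\|_\infty$, plus discreteness of $\Z^{2d}$, makes this routine), and the localization should be phrased by truncating the intensities outside a ball of radius $M$ and comparing the truncated and original processes on the event that neither marginal exits the ball before $T$, an event whose probability is controlled uniformly in $n$ precisely because of the marginal identities.
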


We will briefly outline the proof of  \ref{Theorem}. For a fixed $n$, let
\begin{align}
\{ Y_{ikm}^n; ~~i = 1,2,3, \quad k = 1,\dots,R, \quad m = 0,1,2,... \}  \label{OrdSpace}
\end{align}
and
\begin{align}
\{ Y_{km}^n; ~~ k = 1,\dots,R, \quad m = 0,1,2,\dots \} \label{OrdSpace2} 
\end{align}
be two sets of independent unit-rate Poisson processes.  At this point, we do not make any assumption on the 
correlation between the processes in the set \eqref{OrdSpace} and the processes in the set
\eqref{OrdSpace2}, except to note that they will not be independent.  In fact, we will construct the Poisson processes of \eqref{OrdSpace}  as functions of the Poisson processes of \eqref{OrdSpace2}. 
For now,  simply consider the processes built using the Poisson processes of \eqref{OrdSpace}
\begin{align}
\begin{split}
X_{\text{sc}}^{\pi_n}(t)= X_{\text{sc}}(0)  + &\sum_{m = 0}^{\infty}\sum_{k=1}^R \Bigg\{ Y_{1km}^n \left(
 \int_{s_m \wedge t}^{s_{m+1}\wedge t}  r_{1k}(\lambda_k, \widetilde \lambda_k, X_{\text{sc}}^{\pi_n}, Z_{\text{sc}}^{\pi_n})(s) ds \right)  \\
&+   Y_{2km}^n
\left( \int_{s_m \wedge t}^{s_{m+1}\wedge t} r_{2k}(\lambda_k, \widetilde \lambda_k, X_{\text{sc}}^{\pi_n}, Z_{\text{sc}}^{\pi_n})(s)  ds \right)
\Bigg\}  \zeta_k \\
Z_{\text{sc}}^{\pi_n}(t) = X_{\text{sc}}(0) + &\sum_{m = 0}^{\infty}\sum_{k=1}^R\Bigg\{ Y_{1km}^n \left(
  \int_{s_m\wedge t}^{s_{m+1}\wedge t}
  r_{1k}(\lambda_k, \widetilde \lambda_k, X_{\text{sc}}^{\pi_n}, Z_{\text{sc}}^{\pi_n})(s)  ds
\right) \\
& +  Y_{3km}^n \left( \int_{s_m \wedge t}^{s_{m+1}\wedge t} r_{3k}(\lambda_k, \widetilde \lambda_k, X_{\text{sc}}^{\pi_n}, Z_{\text{sc}}^{\pi_n})(s)  ds  \right)  \Bigg\}  \zeta_k,
\end{split} \label{ordinary2}
\end{align}
along with 
\begin{align}
\begin{split}
 X_{\text{crp}}^{\pi_n}(t)  &= X_{\text{crp}}(0) + \sum_{m=0}^{\infty} \sum_{k =1}^R Y_{km}^n \left(
  \int_{t \wedge  s_m}^{t \wedge s_{m+1}} \lambda_k(X^{\pi_n}_{\text{crp}}(s) ) ds  \right)
\zeta_k \\
 Z_{\text{crp}}^{\pi_n} (t) &= X_{\text{crp}}(0) + \sum_{m=0}^{\infty}  \sum_{k =1}^R Y_{km}^n \left(
   \int_{t \wedge  s_m}^{t \wedge s_{m+1}}  \widetilde \lambda_k(Z^{\pi_n}_{\text{crp}}(s) ) ds  \right)
\zeta_k,
\end{split} \label{localCRP2}
\end{align}
which are built with the Poisson processes \eqref{OrdSpace2}.
Note that 
 $(X_{\text{sc}},Z_{\text{sc}}) \overset{dist}{=} (X_{\text{sc}}^{\pi_n}, Z_{\text{sc}}^{\pi_n})$ irrespective of $n$. 
 The construction we will employ will allow us to conclude that 
 $(X_{\text{sc}}^{\pi_n}, Z_{\text{sc}}^{\pi_n})$ and 
$(X_{\text{crp}}^{\pi_n},Z_{\text{crp}}^{\pi_n})$ satisfy
\begin{align}
\begin{split}
\lim_{n \to \infty}P\left( \max_{i\in\{0,...,K \}}
|(X_{\text{sc}}^{\pi_n}(t_i),Z_{\text{sc}}^{\pi_n}(t_i)) -  (X_{\text{crp}}^{\pi_n}(t_i),
Z_{\text{crp}}^{\pi_n}(t_i))| > \gamma\right) = 0
\end{split} \label{Couple}
\end{align}
for any $\gamma > 0$. We can then appeal to a standard Portmanteau
type argument to finish the proof of Proposition \ref{Theorem}: let $\epsilon >0$, and consider any bounded
continuous map 
$f: (\RR^d \times \RR^d)^K \to \RR$ 
with Lipshitz constant $L$.  Then
\begin{align*}
\begin{split}
&|\E f( (X_{\text{sc}}, Z_{\text{sc}})(\textbf{t})  - \E f(
(X_{\text{crp}}^{\pi_n},Z_{\text{crp}}^{\pi_n})(\textbf{t})  |  \\
&~~~~~~~~~= | \E f( (X_{\text{sc}}^{\pi_n},
Z_{\text{sc}}^{\pi_n})(\textbf{t})- \E f( (X_{\text{crp}}^{\pi_n}, Z_{\text{crp}}^{\pi_n})(\textbf{t}) |\\
&~~~~~~~~~\leq L \E [  |(X_{\text{sc}}^{\pi_n},Z_{\text{sc}}^{\pi_n}) (\textbf{t}) -
(X_{\text{crp}}^{\pi_n} , Z_{\text{crp}}^{\pi_n}) (\textbf{t})| ] \\
&~~~~~~~~~\leq L K \gamma +  L~ P(\max_{i=0,...,K } |(X_{\text{sc}}^{\pi_n}(t_i),Z_{\text{sc}}^{\pi_n}(t_i)) -  (X_{\text{crp}}^{\pi_n}(t_i), Z_{\text{crp}}^{\pi_n}(t_i))| > \gamma).  
\end{split}
\end{align*}
We can first choose $\gamma < \epsilon/(2LK)$.  With this $\gamma$ fixed, we
may choose $n$ large enough so that  the second piece can be bounded by
$\epsilon/2$, and the claim is achieved. 

\vspace{.1in}

We must still describe the specific construction alluded to above that will allow us to conclude \eqref{Couple}.  
For each $n$, let 
\begin{align}
\{ Y_{km}^n, Y_{ikm}^{n,aug}, i = 1,2,3, ~ k = 1,\dots,R, ~m = 0,1,2,\dots\},   \label{Space}
 \end{align}
 be independent unit-rate Poisson processes.
We   generate $(X_{\text{crp}}^{\pi_n}, Z_{\text{crp}}^{\pi_n})$ up to time $T$ using the processes $Y_{km}^n$ according to \eqref{localCRP2}.  We now turn our attention to constructing the required independent unit-rate Poisson processes $Y_{ikm}^n$, and the coupled processes $(X_{\text{sc}}^{\pi_n}, Z_{\text{sc}}^{\pi_n})$ built using them according to \eqref{ordinary2}.

Inductively arguing on $m$, suppose we have already generated $(X_{\text{sc}}^{\pi_n}, Z_{\text{sc}}^{\pi_n})$ given by \eqref{ordinary2} up to time $s_m\ge 0$.  We further suppose that we have constructed the relevant Poisson processes $Y_{ik\tilde m}^n$ for all $\tilde m < m$.  We must now describe how to construct $Y_{ikm}^n$ for each valid pair $(i,k)$.  We  define the following random  times for each $i\in \{1,2,3\}$ and $k\in \{1,\dots,R\}$:
\begin{align}\label{eq:cT_gen}
\begin{split}
&\cT_{ikm} \eqdef r_{ik}(\lambda_k, \widetilde \lambda_k, X_{\text{sc}}^{\pi_n}, Z_{\text{sc}}^{\pi_n})(s_m)  \cdot  \Delta_m(\pi_n)
\end{split}
\end{align} 
and
\begin{align*}
T_{km}^{\text{crp}} \eqdef \left(\int_{s_m}^{s_{m+1}}  \lambda(X_{\text{crp}}^{\pi_n}(s) ) ds\right) \vee
\left(\int_{s_m}^{s_{m+1}}  \widetilde \lambda (Z_{\text{crp}}^{\pi_n}(s) ) ds\right),
\end{align*}
where, as usual, $a \vee b \eqdef \max\{a,b\}$, and we recall that $(X_{\text{crp}}^{\pi_n}, Z_{\text{crp}}^{\pi_n})$ has already been generated up to time $T$.
For notational clarity we refrain from using $n$ in the notation above for the random times. 
 We now define 
$Y_{1km}^n$ in the following manner
\begin{align*}
\begin{split}
Y_{1km}^n(u) &=Y_{km}^n(u) \hspace{1.77in} \textrm{for}~~ u\leq \cT_{1km} \\[1ex]
Y_{1km}^n(u) &=  Y_{1km}^n (\cT_{1km} ) + Y_{1km}^{n,aug}(u-\cT_{1km}) ~~~~~~ \textrm{for}~~ u >  \cT_{1km}. 
\end{split}
\end{align*}
Having defined $Y_{1km}^n$, we turn to the construction of $Y_{2km}^n$ and $Y_{3km}^n$.  The construction is based on which of two of the following cases hold.
\begin{enumerate}[1.]
	\item If   $ \lambda(Z_{\text{sc}}^{\pi_n}(s_m)) \le \lambda(X_{\text{sc}}^{\pi_n}(s_m))$, then let $Y_{2km}^n$ satisfy
\begin{align*}
\begin{split}
Y_{2km}^n(u) &= Y_{km}^n(u + \cT_{1km}) -  Y_{km}^n(\cT_{1km})   \hspace{0.77in} \textrm{for}~~
u \leq \cT_{2km} \\[1ex]
Y_{2km}^n(u) &= Y_{2km}^n( \cT_{2km})  + Y_{2km}^{n,aug}(u-\cT_{2km}) \hspace{.56in} \textrm{for}~~ u > \cT_{2km}, 
\end{split}
\end{align*} 
and let $Y_{3km}^n(u) = Y_{3km}^{n, aug}(u)$ for all $u\ge 0$.

\item If   $ \lambda(Z_{\text{sc}}^{\pi_n}(s_m)) > \lambda(X_{\text{sc}}^{\pi_n}(s_m))$, then let $Y_{3km}^n$ satisfy
\begin{align*}
\begin{split}
Y_{3km}^n(u) &= Y_{km}^n(u + \cT_{1km}) -  Y_{km}^n(\cT_{1km})   \hspace{0.77in} \textrm{for}~~
u \leq \cT_{3km} \\[1ex]
Y_{3km}^n(u) &= Y_{3km}^n( \cT_{3km})  + Y_{3km}^{n,aug}(u-\cT_{3km}) \hspace{.56in} \textrm{for}~~ u > \cT_{3km}, 
\end{split}
\end{align*} 
and let $Y_{2km}^n(u) = Y_{2km}^{n, aug}(u)$ for all $u\ge 0$.
\end{enumerate}

%
%
%
%
%

\noindent Note that the strong Markov property guarantees that the processes $\{Y_{ikm}^n\}$ so constructed are independent, unit-rate Poisson processes. We then generate $(X_{\text{sc}}^{\pi_n}, Z_{\text{sc}}^{\pi_n})$ between times $s_m$ and $s_{m+1}$ according to \eqref{ordinary2} with the processes $\{Y_{ikm}^n\}$.  Note that in so doing, we have also created a coupling between  $(X^{\pi_n}_{\text{sc}}, Z^{\pi_n}_{\text{sc}})$ and $(X^{\pi_n}_{\text{crp}}, Z^{\pi_n}_{\text{crp}})$.

Note that for each $i, k$, and $m$, the value $\cT_{ikm}$ as defined in \eqref{eq:cT_gen} is an
approximation to 
$$T_{ikm}^{\text{sc}} \eqdef \int_{s_m}^{s_{m+1}}  r_{ik}(\lambda_k,
\widetilde \lambda_k, X_{\text{sc}}^{\pi_n},Z_{\text{sc}}^{\pi_n})(s) \, ds.$$
We would like to make a few observations about this approximation before proceeding further. 
\begin{lemma}
Fix $n$, and  let $m \in \{0,1,\dots\}$.  
If  
\begin{align}
\sum_{k=1}^R \sum_{i=1}^3  Y_{ikm}^n(\cT_{ikm} \vee T_{ikm}^{\text{sc}}) =1 \label{HalfCondition}
\end{align}
 then 
there is a unique $j \in \{1,2,3\} $ and $\ell \in \{1, ..., R \} $ for
which
 \[
 	Y_{j \ell m}^n(\cT_{j \ell m}  \wedge T_{j \ell m}^{\text{sc}}) =1.
\]
  \label{LetmeGo}
\end{lemma}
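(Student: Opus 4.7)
The plan is to recast the hypothesis in Poisson-time language and then locate the single jump. Since $\cT_{ikm} \wedge T_{ikm}^{\text{sc}} \le \cT_{ikm} \vee T_{ikm}^{\text{sc}}$, the ``min window'' $[0, \cT_{ikm} \wedge T_{ikm}^{\text{sc}}]$ is contained in the ``max window'' $[0, \cT_{ikm} \vee T_{ikm}^{\text{sc}}]$, so the hypothesis forces exactly one pair $(j, \ell)$ to have exactly one Poisson jump of $Y_{j\ell m}^n$ in its max window, at some Poisson time $\tau$, while every other $Y_{ikm}^n$ is zero on its max window. Uniqueness of the conclusion is then automatic once I verify $\tau \le \cT_{j\ell m} \wedge T_{j\ell m}^{\text{sc}}$.

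To place $\tau$ in the min window, I would split on whether the Poisson jump is ``realized'' as a real-time jump of the split-coupled process $(X_{\text{sc}}^{\pi_n}, Z_{\text{sc}}^{\pi_n})$ in $[s_m, s_{m+1}]$ -- equivalently, on whether $\tau \le T_{j\ell m}^{\text{sc}}$. If it is, the realization time $t^* \in [s_m, s_{m+1}]$ is determined by $\int_{s_m}^{t^*} r_{j\ell}(\cdots)(u)\,du = \tau$, and the hypothesis guarantees no other Poisson jump has been consumed before $t^*$, so the split-coupled process is constant on $[s_m, t^*)$, $r_{j\ell}$ stays at its $s_m$ value there, and hence $\tau = r_{j\ell}(\cdots)(s_m)(t^* - s_m) \le r_{j\ell}(\cdots)(s_m)\,\Delta_m\pi_n = \cT_{j\ell m}$, which combined with $\tau \le T_{j\ell m}^{\text{sc}}$ yields $\tau \le \cT_{j\ell m} \wedge T_{j\ell m}^{\text{sc}}$. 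In the other case $\tau > T_{j\ell m}^{\text{sc}}$, no jump of the split-coupled process occurs in $[s_m, s_{m+1}]$ at all, so every rate $r_{ik}$ stays at its $s_m$ value throughout and $T_{ikm}^{\text{sc}} = \cT_{ikm}$ for every $(i, k)$. Applied to $(j, \ell)$ this collapses the max window to $[0, T_{j\ell m}^{\text{sc}}]$, so $\tau > T_{j\ell m}^{\text{sc}}$ would place $\tau$ outside the max window and contradict the hypothesis; hence this case cannot occur.

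The only step I expect to need care with is the constancy of the split-coupled process on $[s_m, t^*)$ in the realized case: a real-time jump at some $t < t^*$ would require a Poisson jump of some $Y_{ikm}^n$ at Poisson time at most $\int_{s_m}^t r_{ik}(\cdots)(u)\,du \le T_{ikm}^{\text{sc}}$, but no such jump exists -- either because $(i, k) \ne (j, \ell)$ and the max window of $Y_{ikm}^n$ contains no jumps, or because $(i, k) = (j, \ell)$ and the unique jump in its max window is $\tau$ itself. Once that is in hand, the rest is a one-line integral identity against a constant integrand.
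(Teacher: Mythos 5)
Your proposal is correct and follows essentially the same route as the paper's proof: both rest on the observation that the rates $r_{ik}$ stay frozen at their $s_m$ values until the first jump of $(X_{\text{sc}}^{\pi_n},Z_{\text{sc}}^{\pi_n})$ after $s_m$, so the unique Poisson point in the $(j,\ell)$ max window must be realized as that first jump, forcing its Poisson time below both $\cT_{j\ell m}$ and $T^{\text{sc}}_{j\ell m}$. Your explicit treatment of the case $\tau > T^{\text{sc}}_{j\ell m}$ only spells out a step the paper leaves implicit; no substantive difference.
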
 
Note the difference between $\wedge$ and $\vee$ in the above statement.   
\begin{proof}
For each $(i,k)$,  define
\[
	Q_{ik}(t) \eqdef Y_{ikm}^n \left( \int_{s_m}^{t+s_m} r_{ik}(\lambda_k, \widetilde \lambda_k, X_{\text{sc}}^{\pi_n},
  Z_{\text{sc}}^{\pi_n})(s) ds \right),
  \]
for $t \ge 0$. 
Note that \eqref{HalfCondition} implies that $Y_{j \ell m}^n(\cT_{j \ell m}
\vee T_{j \ell m}^{\text{sc}}) = 1$ for some $j$ and $\ell$ and $Y_{i k
  m}^n(\cT_{i k m}
\vee T_{ik m}^{\text{sc}}) = 0$  for all $(i,k) \neq (j, \ell)$. In particular, this implies $Q_{j \ell}$ is the first one among the set of counting processes $\{Q_{ik} \} $ to jump. (This follows since  for all $(i,k)$, $r_{ik}(\lambda_k, \widetilde \lambda_k, X_{\text{sc}}^{\pi_n},
  Z_{\text{sc}}^{\pi_n})(s)$ will not change from $r_{ik}(\lambda_k, \widetilde \lambda_k, X_{\text{sc}}^{\pi_n},
  Z_{\text{sc}}^{\pi_n})(s_m)$ until the first jump of $(X_{\text{sc}}^{\pi_n},
  Z_{\text{sc}}^{\pi_n})$ during $s>s_m$.)  By the definitions of $\cT_{j\ell m}$ and $T^{\text{sc}}_{j \ell m}$, it easily follows that $Y_{j\ell m}^n (\cT_{j\ell m} \wedge T^{\text{sc}}_{j\ell m} = 1$.  It is trivial that no other $(i,k)$ pair can satisfy this relation.
  \end{proof}


The following is an analogue to Lemma \eqref{LetmeGo}.
\begin{lemma}
If $( X_{\text{crp}}^{\pi_n}, Z_{\text{crp}}^{\pi_n})(s_m) = ( X_{\text{sc}}^{\pi_n}, Z_{\text{sc}}^{\pi_n})(s_m)$ and
\[
	\sum_k Y_{km}^n \left(  \left( \sum_{i=1}^3 \cT_{i k m} \right)    \vee T^{\text{crp}}_{km}    \right)  = 1
\]
then there is a unique $j$ for which  
\[
	Y_{j m}^n \left( \left( \sum_{i=1}^3 \cT_{i j m} \right)   \wedge  T^{\text{crp}}_{j m} \right)  =1. 
 \] 
Further, the first jump time of the Poisson process $Y_{j m}^n$ occurs at some $t_0$ satisfying	
\[
	t_0 <  \left( \lambda_{j}(X_{\text{crp}}^{\pi_n}(s_m)) \vee \widetilde
\lambda_{j}(Z_{\text{crp}}^{\pi_n}(s_m)) \right) \Delta_m.
\]
 \label{LetmeGo2}
\end{lemma}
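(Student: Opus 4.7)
The plan is to imitate the proof of Lemma~\ref{LetmeGo}, exploiting the integer-valued nature of the Poisson processes together with the matched initial condition at $s_m$. Since each $Y_{km}^n\bigl((\sum_i\cT_{ikm})\vee T^{\text{crp}}_{km}\bigr)$ is a non-negative integer and the hypothesis says they sum to $1$, exactly one index $j$ has $Y_{jm}^n\bigl((\sum_i\cT_{ijm})\vee T^{\text{crp}}_{jm}\bigr)=1$ while $Y_{km}^n\bigl((\sum_i\cT_{ikm})\vee T^{\text{crp}}_{km}\bigr)=0$ for every $k\neq j$. Because $T^{\text{crp}}_{km}$ dominates both time changes $\int_{s_m}^{s_{m+1}}\lambda_k(X^{\pi_n}_{\text{crp}}(s))\,ds$ and $\int_{s_m}^{s_{m+1}}\widetilde\lambda_k(Z^{\pi_n}_{\text{crp}}(s))\,ds$, the vanishing of these Poisson values for $k\neq j$ rules out any type-$k$ firings by either CRP process during $[s_m,s_{m+1}]$ for $k\neq j$. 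Uniqueness in the conclusion is then automatic since $\wedge\leq\vee$: if some second index $j'$ satisfied $Y_{j'm}^n\bigl((\sum_i\cT_{ij'm})\wedge T^{\text{crp}}_{j'm}\bigr)=1$, we would also have $Y_{j'm}^n(\vee)\geq 1$, contradicting the sum-one hypothesis.

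Let $t_0$ be the first jump time of $Y_{jm}^n$; it remains to show $t_0\leq(\sum_i\cT_{ijm})\wedge T^{\text{crp}}_{jm}$. The matched state $(X^{\pi_n}_{\text{crp}},Z^{\pi_n}_{\text{crp}})(s_m)=(X^{\pi_n}_{\text{sc}},Z^{\pi_n}_{\text{sc}})(s_m)$ gives
\[
\sum_{i=1}^3\cT_{ijm}=\bigl(\lambda_j(X^{\pi_n}_{\text{crp}}(s_m))\vee\widetilde\lambda_j(Z^{\pi_n}_{\text{crp}}(s_m))\bigr)\Delta_m.
\]
Suppose for contradiction that $t_0>\sum_i\cT_{ijm}$. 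Before its first jump $X^{\pi_n}_{\text{crp}}$ is constant at $X^{\pi_n}_{\text{crp}}(s_m)$, so a type-$j$ firing at some $\tau\in(s_m,s_{m+1}]$ would force $t_0=\lambda_j(X^{\pi_n}_{\text{crp}}(s_m))(\tau-s_m)\leq\lambda_j(X^{\pi_n}_{\text{crp}}(s_m))\Delta_m\leq\sum_i\cT_{ijm}$, contradicting our assumption; the analogous argument rules out a type-$j$ firing by $Z^{\pi_n}_{\text{crp}}$. Combined with the absence of type-$k$ firings for $k\neq j$, the pair $(X^{\pi_n}_{\text{crp}},Z^{\pi_n}_{\text{crp}})$ is constant on $[s_m,s_{m+1}]$, so $T^{\text{crp}}_{jm}=\sum_i\cT_{ijm}$, and the hypothesis becomes $Y_{jm}^n(\sum_i\cT_{ijm})=1$, forcing $t_0\leq\sum_i\cT_{ijm}$, a contradiction. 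A parallel argument starting from $t_0>T^{\text{crp}}_{jm}$ yields $Y_{jm}^n(T^{\text{crp}}_{jm})=0$, hence no type-$j$ firings, hence again $T^{\text{crp}}_{jm}=\sum_i\cT_{ijm}$, and the hypothesis forces $t_0\leq T^{\text{crp}}_{jm}$, another contradiction.

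The strict bound $t_0<\bigl(\lambda_j(X^{\pi_n}_{\text{crp}}(s_m))\vee\widetilde\lambda_j(Z^{\pi_n}_{\text{crp}}(s_m))\bigr)\Delta_m$ then holds almost surely, since a unit-rate Poisson process places a jump at any prescribed $\mathcal{F}_{s_m}$-measurable time with probability zero. The main subtlety I anticipate lies not in the combinatorics but in correctly identifying that the only way a jump of $Y_{jm}^n$ inside the relevant time-changed window translates into a CRP transition is via the piecewise-constant representation of the intensities up to the first jump; once the matched initial state collapses $\lambda_j(X^{\pi_n}_{\text{crp}}(s_m))$ and $\widetilde\lambda_j(Z^{\pi_n}_{\text{crp}}(s_m))$ into the quantities appearing in $\sum_i\cT_{ijm}$, both contradictions above reduce to the elementary calculation displayed.
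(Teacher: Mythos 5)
Your proof is correct and takes essentially the same route as the paper's: the matched states at $s_m$ give $\sum_{i=1}^3\cT_{ijm}=\bigl(\lambda_j(X_{\text{crp}}^{\pi_n}(s_m))\vee\widetilde\lambda_j(Z_{\text{crp}}^{\pi_n}(s_m))\bigr)\Delta_m$, and the constancy of the CRP pair before the first firing of $Y_{jm}^n$ (forced by the vanishing of the other Poisson counts) yields the claim. You merely spell out the contradiction arguments and the almost-sure justification of the strict inequality that the paper's two-line proof leaves implicit.
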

\begin{proof}
Because the two processes are equal at time $s_m$, we have that 
\[
	 \sum_{i=1}^3 \cT_{i k m}   =  \left ( \lambda_{k_0}(X_{\text{crp}}^{\pi_n}(s_m)) \vee \widetilde
\lambda_{k_0}(Z_{\text{crp}}^{\pi_n}(s_m)) \right) \Delta_m.
\]
As neither $Z_{\text{crp}}^{\pi_n}$ nor $X_{\text{crp}}^{\pi_n}$ changes until the first
firing of $Y_{j m}$, the claim follows. 
\end{proof}

Based on the last two observations, we have the following lemma which will be useful in proving Proposition \ref{Theorem}.

\begin{lemma}
Fix $n$ and suppose that, for a given path
of $(X^{\pi_n}_{\text{sc}},Z^{\pi_n}_{\text{sc}})(\omega),$ $(X^{\pi_n}_{\text{crp}}, Z^{\pi_n}_{\text{crp}})(\omega)$
coupled in the way we described above, 

\begin{align}
\begin{split}
H_{m,n}(\omega) \eqdef \sum_{k=1}^R  \max\left \{ \sum_{i=1}^3
  Y_{ikm}^n(\cT_{ikm} \vee T_{ikm}^{\text{sc}}),    Y_{km}^n\left(\left(
      \sum_{i=1}^3  \cT_{ikm}  \right) \vee T_{km}^{\text{crp}} \right) \right \} \leq 1, 
\end{split} \label{Condition}
\end{align}
for all $m$.
Then for all $m = 0,\dots, n$, 
  $$(X_{\text{sc}}^{\pi_n},Z_{\text{sc}}^{\pi_n})(s_m,\omega) = (X_{\text{crp}}^{\pi_n}, Z_{\text{crp}}^{\pi_n})(s_m,\omega)$$  \label{Proper}
\end{lemma}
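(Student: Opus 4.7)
The natural approach is induction on $m$, with the base case $m=0$ being immediate from the standing assumption $X(0)=Z(0)$ and the identical initial condition $X_{\text{sc}}(0)=X_{\text{crp}}(0)$ used in \eqref{ordinary2} and \eqref{localCRP2}. For the inductive step, assume $(X_{\text{sc}}^{\pi_n},Z_{\text{sc}}^{\pi_n})(s_m,\omega)=(X_{\text{crp}}^{\pi_n},Z_{\text{crp}}^{\pi_n})(s_m,\omega)$, so that $\lambda_k$ and $\widetilde\lambda_k$ evaluated on either pair at $s_m$ agree. Denote $a_k\eqdef\sum_{i=1}^3 Y_{ikm}^n(\cT_{ikm}\vee T_{ikm}^{\text{sc}})$ and $b_k\eqdef Y_{km}^n((\sum_i \cT_{ikm})\vee T_{km}^{\text{crp}})$. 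The hypothesis \eqref{Condition} forces $\sum_k \max(a_k,b_k)\in\{0,1\}$.

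If the sum is $0$, then for every $k$ and $i$ one has $Y_{ikm}^n(T_{ikm}^{\text{sc}})=0$ and $Y_{km}^n(T_{km}^{\text{crp}})=0$, so neither the split coupling nor the local-CRP coupling jumps during $[s_m,s_{m+1}]$. Consequently both pairs are unchanged on this interval and equality propagates to $s_{m+1}$. If the sum is $1$, there is a unique index $k_0$ where the max is attained; for all $k\neq k_0$ the previous argument rules out jumps in either coupling. The plan for $k_0$ is to apply Lemmas \ref{LetmeGo} and \ref{LetmeGo2} to extract the unique ``culprit'' jump on each side, and then to match them using the explicit coupling construction: by hypothesis $Y_{1k_0m}^n$ agrees with $Y_{k_0m}^n$ on $[0,\cT_{1k_0m}]$, and (say, in the case $\widetilde\lambda_{k_0}\le\lambda_{k_0}$ at $s_m$) $Y_{2k_0m}^n$ is the shifted continuation of $Y_{k_0m}^n$ past $\cT_{1k_0m}$, so that the first jump of $Y_{k_0m}^n$ in the interval corresponds bijectively to the first jump of either $Y_{1k_0m}^n$ on $[0,\cT_{1k_0m}]$ or $Y_{2k_0m}^n$ on $[0,\cT_{2k_0m}]$.

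One then checks that these correspondences force the same state transition on both pairs: a first jump of $Y_{1k_0m}^n$ at Poisson-time $t_0\le\cT_{1k_0m}$ triggers a shared $(1,k_0)$ SC event moving both $X_{\text{sc}}^{\pi_n}$ and $Z_{\text{sc}}^{\pi_n}$ by $\zeta_{k_0}$, while the same jump of $Y_{k_0m}^n$, occurring at $t_0\le\widetilde\lambda_{k_0}(s_m)\Delta_m\le\lambda_{k_0}(s_m)\Delta_m$, causes both $X_{\text{crp}}^{\pi_n}$ and $Z_{\text{crp}}^{\pi_n}$ to jump by $\zeta_{k_0}$; a first jump of $Y_{2k_0m}^n$ at Poisson-time in $(0,\cT_{2k_0m}]$ produces a $(2,k_0)$ SC event moving only $X_{\text{sc}}^{\pi_n}$, and the corresponding jump of $Y_{k_0m}^n$ falls in $(\widetilde\lambda_{k_0}(s_m)\Delta_m,\lambda_{k_0}(s_m)\Delta_m]$, so only $X_{\text{crp}}^{\pi_n}$ moves. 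The symmetric case $\widetilde\lambda_{k_0}>\lambda_{k_0}$ uses $Y_{3k_0m}^n$ in place of $Y_{2k_0m}^n$. In each subcase the induced change in $(X,Z)$ is identical on the SC and CRP sides, so agreement propagates to $s_{m+1}$.

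I expect the main obstacle to be bookkeeping rather than any conceptual subtlety: one must verify that the ``or'' structure inside $H_{m,n}$ really does imply $a_{k_0}=b_{k_0}=1$ (not just that one of them is $1$), so that Lemmas \ref{LetmeGo} and \ref{LetmeGo2} can both be invoked. This follows because the coupling identifies the first jump of $Y_{k_0m}^n$ in the interval with the first jump of some $Y_{ik_0m}^n$ occurring within its own $\cT_{ik_0m}$ budget, and conversely any such SC jump feeds back into a jump of $Y_{k_0m}^n$ at Poisson-time $\le \sum_i\cT_{ik_0m}$. Combined with the fact that rates are frozen at their $s_m$-values until the first jump, this rules out any ``phantom'' event that could appear on one side but not the other, and closes the induction.
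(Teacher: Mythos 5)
Your proposal is correct and follows essentially the same route as the paper: induction over the partition points, the trivial case $H_{m,n}=0$, and for $H_{m,n}=1$ an appeal to Lemmas \ref{LetmeGo} and \ref{LetmeGo2} together with the explicit identification of $Y_{1km}^n$, $Y_{2km}^n$, $Y_{3km}^n$ with segments of $Y_{km}^n$ to match the single jump across the two couplings. The only difference is that you spell out the type-$2$/type-$3$ subcases (where only the $X$-components or only the $Z$-components move) and the verification that the maximum being $1$ forces a matched jump on both sides, bookkeeping the paper compresses into a single sentence and defers to the cited thesis.
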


\begin{proof}
We will omit $\omega$ in the expressions. We have 
\[
	(X^{\pi_n}_{\text{sc}}, Z^{\pi_n}_{\text{sc}})(s_0) =  (X_{\text{crp}}^{\pi_n}, Z_{\text{crp}}^{\pi_n})(s_0)
\]
by assumption. 
Arguing inductively, assume that 
\begin{align*}
\begin{split}
(X^{\pi_n}_{\text{sc}}, Z^{\pi_n}_{\text{sc}})(s_m) =  (X_{\text{crp}}^{\pi_n}, Z_{\text{crp}}^{\pi_n})(s_m).
\end{split} 
\end{align*}
We will show that
$$(X^{\pi_n}_{\text{sc}}, Z^{\pi_n}_{\text{sc}})(s_{m+1}) =  (X^{\pi_n}_{\text{crp}}, Z^{\pi_n}_{\text{crp}})(s_{m+1})$$
when \eqref{Condition} holds. 
If  $H_{m,n} = 0$ for this $m$,
then 
\[
(X^{\pi_n}_{\text{sc}}, Z^{\pi_n}_{\text{sc}})(s_{m+1}) =  (X^{\pi_n}_{\text{sc}}, Z^{\pi_n}_{\text{sc}})(s_m) =  (X^{\pi_n}_{\text{crp}}, Z^{\pi_n}_{\text{crp}})(s_m) =  (X^{\pi_n}_{\text{crp}}, Z^{\pi_n}_{\text{crp}})(s_{m+1}),
\]
and there  is nothing to do. Therefore we consider the case in which $H_{m,n} =
 1$.
More specifically, suppose that for some $k_0$,  
$$\max\left \{ \sum_{i=1}^3  Y_{i k_0 m}^n(\cT_{i k_0 m} \vee T_{i k_0 m}^{\text{sc}}),
  Y_{k_0 m}^n\left( \left(\sum_{i=1}^3  \cT_{i k_0 m} \right)\vee T_{ k_0 m}^{\text{crp}} \right) \right \}
=1.$$
This means that, by condition \eqref{Condition},
$$\max\left \{ \sum_{i=1}^3  Y_{i k m}^n (\cT_{i \ell m} \vee T_{i
    k m}^{\text{sc}}),
  Y_{k m}^n\left( \left(\sum_{i=1}^3  \cT_{i k m} \right)\vee
    T_{k m}^{\text{crp}} \right) \right \}
=0$$
for all $k \neq k_0$.  
Combined with Lemmas \ref{LetmeGo} and \ref{LetmeGo2}, these  conditions guarantee that  each of the processes $X_{\text{sc}}^{\pi_n},  Z^{\pi_n}_{\text{sc}},  X^{\pi_n}_{\text{crp}},  Z^{\pi_n}_{\text{crp}}$ jump precisely one time in the time interval $[s_m,s_{m+1}]$, and the jump happens according to reaction channel $k_0$ (see \cite{KoyamaThesis} for more details).  That is, we have
\[
	X_{\text{sc}}^{\pi_n}(s_{m+1}) =Z_{\text{sc}}^{\pi_n}(s_{m+1}) =X_{\text{crp}}^{\pi_n}(s_{m+1}) =Z_{\text{crp}}^{\pi_n}(s_{m+1}) = X_{\text{sc}}^{\pi_n}(s_{m}) + \zeta_{k_0},
\]
and we are done.
\end{proof}

It is not too difficult to see that  if $\lambda_k$ and $\widetilde \lambda_k$ are uniformly bounded for all $k$, then we can 
make the condition in Lemma \ref{Proper} hold with a probability greater than
$1-\epsilon$ for any $\epsilon>0$  by setting
$\text{mesh}(\pi_n)$ small enough.   Of course, we do not have such a uniform bound on the
intensity functions. Also,  note that Lemma \ref{Proper} does \textbf{not} imply that 
\[
	(X^{\pi_n}_{\text{crp}},Z^{\pi_n}_{\text{crp}})(t) = (X^{\pi_n}_{\text{sc}}, Z^{\pi_n}_{\text{sc}})(t) \text{  for } t \in [s_{m}, s_{m+1}],
	\]
 even if the conditions of the lemma are met, as the processes may (and most likely will) jump at slightly different times.   However, we trivially note that under the conditions of Lemma \ref{Proper},
 \begin{equation}\label{eq:245423}
 (X^{\pi_n}_{\text{crp}},Z^{\pi_n}_{\text{crp}})(t) = (X^{\pi_n}_{\text{sc}}, Z^{\pi_n}_{\text{sc}})(t) \text{ for all } t \in  [s_{m}, s_{m+1}]
 \end{equation}
 if neither $(X^{\pi_n}_{\text{crp}},Z^{\pi_n}_{\text{crp}})$ nor
$(X^{\pi_n}_{\text{sc}},Z^{\pi_n}_{\text{sc}})$ jump at all in $[s_m, s_{m+1}]$.

We are now in a position to prove Proposition \ref{Theorem}.

\begin{proof} [Proof of Proposition \ref{Theorem}]   
We first recall that $\textbf{t} = (t_1,\dots,t_K)$ for some $K\in \{1,2,\dots\}$.  Next, we define 
\[
	K_0^n \eqdef \{m \in \{ 0,...,n-1\} ~ ; ~ \{t_j\}_{j=1}^K \cap [s_{m} , s_{m+1}) \neq \emptyset \}.
\]
Fix $\epsilon>0$.  As we remarked around \eqref{Couple}, it suffices to show that, for large enough $n$, 
\[
	P\left( \max_{i=0,...,K }|(X_{\text{sc}}^{\pi_n}(t_i),Z_{\text{sc}}^{\pi_n}(t_i)) -  (X_{\text{crp}}^{\pi_n}(t_i),
Z_{\text{crp}}^{\pi_n}(t_i))| > 0\right) < \epsilon,
\]
where we converted the $\gamma$ in \eqref{Couple} to a zero as our processes take values in $\Z^d$.

We will resort to a localization
argument and take advantage of the fact that  $X$ and $Z$ are both
nonexplosive.
Let $M > 0$, and let $H_{m,n}$ be  defined as in Lemma \ref{Proper}. 
Define
\begin{align}
\begin{split}
A_n(\mathbf{t}) &\eqdef \left\{ \omega :
 H_{m,n}(\omega) \leq 1 \textrm{~if } m
  \not \in K_0^n  \text{ and } H_{m,n}(\omega)= 0 \textrm{~ if } m \in K_0^n\right\},
\end{split}
\end{align}
and
\begin{align}
\begin{split}
B_{M,n} \eqdef &\{ \omega: \max\{ \sup_{s\leq T} \lambda_k(X^{\pi_n}_{\text{sc}}(s)), 
\sup_{s\leq T} \widetilde \lambda_k(Z^{\pi_n}_{\text{sc}}(s)),   \sup_{s\leq T}\lambda_k(X^{\pi_n}_{\text{crp}}(s)),
 \sup_{s\leq T}\widetilde \lambda_k(Z^{\pi_n}_{\text{crp}}(s)\}  \leq M  \}.
\end{split}
\end{align}
Note that by the non-explosivity of the processes,  the supremums are achieved everywhere they appear above.
By Lemma \ref{Proper} and the arguments in and around \eqref{eq:245423},   we have that
\[
	A_n(\mathbf{t})  \subset \{(X^{\pi_n}_{\text{sc}},Z^{\pi_n}_{\text{sc}})(\mathbf{t}) = (X^{\pi_n}_{\text{crp}}, Z^{\pi_n}_{\text{crp}})(\mathbf{t})\}.
\]
Therefore 
\begin{align}
P( (X^{\pi_n}_{\text{sc}},Z^{\pi_n}_{\text{sc}})(\mathbf{t}) \neq (X^{\pi_n}_{\text{crp}}, Z^{\pi_n}_{\text{crp}})(\mathbf{t}) ) &\leq P(A_n^C(\mathbf{t})) \notag \\
&= P(A_n^C(\mathbf{t}) \cap B_{M,n}) +  P(A_n^C (\mathbf{t})\cap B_{M,n}^C). \label{Ineq}
\end{align}
We handle the two pieces on the right hand side of \eqref{Ineq} separately.

For the second term in  \eqref{Ineq}, we first note that 
\begin{align*}
\begin{split}
B_{M,n}^C \subset \{\sup_{s\leq T} \lambda_k(X^{\pi_n}_{\text{sc}} (s)) >M\} &\cup \{\sup_{s\leq
  T}\widetilde \lambda_k(Z^{\pi_n}_{\text{sc}}(s)) > M \}  \cup \\ 
&\{\sup_{s\leq T} \lambda_k(X^{\pi_n}_{\text{crp}}(s)) >M\}   \cup
\{\sup_{s\leq T}\widetilde \lambda_k(Z_{\text{crp}}^{\pi_n}(s)) >M\}.
\end{split} 
\end{align*} 
Now, recall that the marginal distributions of $X^{\pi_n}_{\text{crp}}$ and $X^{\pi_n}_{\text{sc}}$ are the same as the
marginal distribution of $X$, and that the same goes for $Z^{\pi_n}_{\text{crp}}$ and $Z^{\pi_n}_{\text{sc}}$
compared with $Z$. Therefore, for all $n$ we have
\begin{align}
P(B_{M,n}^C)  \leq 2 \times \left [ P(\sup_{s \leq T} \{ \lambda_k(X_s) \} > M) +  P(\sup_{s \leq T}
 \{ \widetilde \lambda_k(Z_s) \} > M) \right ]  . \label{Monotone}
\end{align}
By the monotone convergence theorem and the fact that the processes
are all non explosive, the right hand side of \eqref{Monotone} will tend to $0$ as $M \to \infty$. 
Therefore, we can take $M$ large enough so
 that the second piece of  \eqref{Ineq}   is smaller than $\epsilon /2$.
We fix this $M$, and turn attention to the first term on the right hand side of \eqref{Ineq}.

We consider the localized version of $H$. In particular, for our fixed $M>0$ 
let 
\[
	H_{m,n}^M(\omega)  \eqdef \sum_{k=1}^R  \max\left \{ \sum_{i=1}^3
  Y_{ikm}^n(M\Delta_m(\pi_n)),    Y_{km}^n\left( 3M\Delta_m(\pi_n) \right)
\right \}.
\]
Then it is clear that, for any $q> 0, $ 
\[
	\left\{ \{ H_{m,n} > q\} \cap B_{M,n} \right\} \subset \left\{ \{
  H_{m,n}^M  > q\} \cap B_{M,n}  \right\} \subset \{H_{m,n}^M  > q\}
  \]
and therefore
\begin{align}
P(A_n^C(\mathbf{t}) & \cap B_{M,n}) \notag \\
&\leq P(  H_{m,n}^M > 1\textrm{ for some } m \not \in K_0^n \textrm{~~\textbf{OR}~~}  H_{m,n}^M >0 \textrm{ for some } m \in K_0^n
) \notag \\
& \leq \sum_{m \not \in K_0^n} P(H_{m,n}^M > 1) + \sum_{m \in K_0^n} P(H_{m,n}^M > 0).\label{eq:45873450}
\end{align} 
To handle these two pieces, we recall two basic facts pertaining to Poisson random variables.  First, if we denote by $W(\Lambda) \sim \text{Poisson}(\Lambda)$ then
\begin{align*}
P (W(\Lambda) > 1) = &1- \exp(-\Lambda) (1 +\Lambda)   \\
\leq & 1-  (1- \Lambda) (1 +\Lambda) \\
= &\Lambda^2,
\end{align*}
where we used the inequality $\exp(-x)\ge 1- x$.  Second, and using the same inequality,
\begin{align*}
P (W(\Lambda) > 0) &=  1- \exp(-\Lambda)  \leq \Lambda.
\end{align*}

Now note that 
\[
	P(  \{ H_{m,n}^M > q\}) \leq P(W(6RM \Delta_m (\pi_n)) > q ).
	\]
Hence, if $\text{mesh}(\pi_n) =  \delta_n$
then by the two facts above and \eqref{eq:45873450}, we have 
\begin{align}
P(A^C_n(\mathbf{t}) \cap B_M) &\leq \sum_{m \not \in K_0^n}  (6RM \Delta_m (\pi_n))^2  +   \sum_{m \in K_0^n}  (6RM \Delta_m (\pi_n)) \notag\\
&\leq   (6RM)^2 \delta_n \sum_{m \not \in K_0^n} 
\frac{ \Delta_m(\pi_n)}{\delta_n} \Delta_m(\pi_n)+    6RM|K_0^n|
\delta_n \notag \notag\\
&\leq  (6RM)^2 \delta_n T  + 6RM |K_0^n|
\delta_n,\label{Last}
\end{align}
where in the third inequality we used that $\frac{\Delta_m(\pi_n)}{\delta_n} < 1$, which follows by the definition of mesh. 
We can now take $n$ large enough so that \eqref{Last} is less
than $\epsilon/2$. Collecting the above, we may now conclude that for such $n$,  
$$P( |( X^{\pi_n}_{\text{sc}},Z^{\pi_n}_{\text{sc}})(\mathbf{t}) - (X^{\pi_n}_{\text{crp}}, Z^{\pi_n}_{\text{crp}})(\mathbf{t})| > 0 )  <
\epsilon,$$
 as required. 
\end{proof}

The following is an immediate corollary  to  Proposition \ref{Theorem}.
\begin{cor}
Let $s= \{s_0 < s_1 < s_2 < \cdots < s_{m_1}\} $ and  $t = \{t_0 < t_1 <
t_2 < \cdots < t_{m_2}\}$.  Let  $f_i : \RR^d \to \RR,$ $i = 0, \dots, m_1$, and $g_j : \RR^d \to \RR,$ $j = 0, \dots, m_2,$
be bounded and continuous functions on $\RR^d$, and assume the
conditions set forth in Proposition \ref{Theorem}. Then 
\[
 \E\left[\prod_{i=0}^{m_1} f_i((X_{\text{crp}}^{\pi_n}(s_i)) \prod_{j=0}^{m_2}g_j( Z_{\text{crp}}^{\pi_n}(t_j))
  )  \right] \to \E \left[  \prod_{i=0}^{m_1} f_i((X_{\text{sc}}(s_i)) \prod_{j=0}^{m_2}g_j( Z_{\text{sc}}(t_j)))\right], \ \text{as } n \to \infty.
  \]
\label{Cor} 
\end{cor}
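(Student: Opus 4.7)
The plan is to reduce the statement to a direct application of Proposition \ref{Theorem} after packaging the test function and the time points appropriately. First, I would form the merged, sorted time vector
\[
\mathbf{u} = (u_1,\dots,u_K) = \text{sort}\bigl(\{s_0,\dots,s_{m_1}\} \cup \{t_0,\dots,t_{m_2}\}\bigr),
\]
and let $\sigma:\{0,\dots,m_1\}\to \{1,\dots,K\}$ and $\tau:\{0,\dots,m_2\}\to \{1,\dots,K\}$ be the index maps satisfying $u_{\sigma(i)}=s_i$ and $u_{\tau(j)}=t_j$. Define
\[
F\bigl((x_1,z_1),\dots,(x_K,z_K)\bigr) \eqdef \prod_{i=0}^{m_1} f_i(x_{\sigma(i)})\,\prod_{j=0}^{m_2} g_j(z_{\tau(j)}),
\]
so that the left-hand expectation equals $\E[F((X^{\pi_n}_{\text{crp}},Z^{\pi_n}_{\text{crp}})(\mathbf{u}))]$ and the right-hand limit equals $\E[F((X_{\text{sc}},Z_{\text{sc}})(\mathbf{u}))]$. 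Because each $f_i,g_j$ is bounded and continuous on $\mathbb R^d$, the map $F$ is bounded and continuous on $(\mathbb R^d\times\mathbb R^d)^K$.

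Next, I would invoke Proposition \ref{Theorem} to conclude that $(X^{\pi_n}_{\text{crp}},Z^{\pi_n}_{\text{crp}})(\mathbf{u}) \Rightarrow (X_{\text{sc}},Z_{\text{sc}})(\mathbf{u})$ in distribution on $(\mathbb R^d\times\mathbb R^d)^K$. Indeed, the proposition supplies $\E[h(\cdot)]\to\E[h(\cdot)]$ for every bounded Lipschitz $h$, and on a finite-dimensional Euclidean space this is well known to be equivalent to weak convergence (e.g.\ the Portmanteau theorem). Weak convergence then yields $\E[F(\cdot)]\to\E[F(\cdot)]$ for every bounded continuous $F$, which is exactly the claim.

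Alternatively, and perhaps more transparently in this discrete setting, I would observe that the proof of Proposition \ref{Theorem} actually establishes the stronger conclusion
\[
P\bigl( (X^{\pi_n}_{\text{crp}},Z^{\pi_n}_{\text{crp}})(\mathbf{u}) \neq (X^{\pi_n}_{\text{sc}},Z^{\pi_n}_{\text{sc}})(\mathbf{u})\bigr) \to 0,
\]
together with the exact distributional identity $(X^{\pi_n}_{\text{sc}},Z^{\pi_n}_{\text{sc}})\stackrel{d}{=}(X_{\text{sc}},Z_{\text{sc}})$. Because $F$ is bounded, bounded convergence gives $\E[F((X^{\pi_n}_{\text{crp}},Z^{\pi_n}_{\text{crp}})(\mathbf{u}))] - \E[F((X^{\pi_n}_{\text{sc}},Z^{\pi_n}_{\text{sc}})(\mathbf{u}))] \to 0$, and the second term equals $\E[F((X_{\text{sc}},Z_{\text{sc}})(\mathbf{u}))]$ for every $n$, finishing the proof.

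There is essentially no hard step here: the only mild care is the bookkeeping that merges the two time grids into a single vector $\mathbf{u}$ and the verification that the product test function $F$ inherits bounded continuity from its factors. Everything else is a direct consequence of Proposition \ref{Theorem}.
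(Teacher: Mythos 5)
Your proposal is correct and matches the paper's (implicit) reasoning: the paper states the corollary as immediate from Proposition \ref{Theorem}, and your argument—merging the two time grids into one vector, packaging the products of the $f_i$ and $g_j$ into a single bounded continuous test function, and either upgrading from bounded Lipschitz to bounded continuous test functions via the standard Portmanteau equivalence or invoking the underlying coupling estimate $P\bigl((X^{\pi_n}_{\text{crp}},Z^{\pi_n}_{\text{crp}})(\mathbf{u})\neq(X^{\pi_n}_{\text{sc}},Z^{\pi_n}_{\text{sc}})(\mathbf{u})\bigr)\to 0$ together with $(X^{\pi_n}_{\text{sc}},Z^{\pi_n}_{\text{sc}})\stackrel{d}{=}(X_{\text{sc}},Z_{\text{sc}})$—is exactly the intended route, with the bookkeeping spelled out.
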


Of course, we hope that Proposition \ref{Theorem} together with Corollary \ref{Cor} imply the
weak convergence of $(X_{\text{crp}}^{\pi_n}, Z_{\text{crp}}^{\pi_n})$ to $(X_{\text{sc}}, Z_{\text{sc}})$ at the process level.  Since it is natural to view $(X_{\text{crp}}^{\pi_n}, Z_{\text{crp}}^{\pi_n}) \in \R^{2d}$, we would ideally like to show that $(X_{\text{crp}}^{\pi_n}, Z_{\text{crp}}^{\pi_n}) \implies (X_{\text{sc}}, Z_{\text{sc}})$ weakly as stochastic processes on $\R^{2d}$.   For  such convergence to hold we require the laws of 
$\{ (X_{\text{crp}}^{\pi_n}, Z_{\text{crp}}^{\pi_n})\}$ to be relatively compact (i.e. every sequence has a convergent subsequence) 
Unfortunately, and perhaps surprisingly, this is not the case as we now show.

The following  result is Theorem 7.2 on page 128 of  \cite{Kurtz86}.  
Following the notation in \cite{Kurtz86}, when $E$ is a metric space we let $D_E[0, \infty)$ be the set of all c\`adl\`ag functions from  $[0,\infty)$ to $E$.
\begin{thm}
Let $(E,r)$ be a complete and separable metric space, and let $\{X_n\}$ be a family of processes with sample paths in $D_E[0,\infty)$ endowed with the Skorohod metric. Then $\{X_n\}$ is relatively compact if and only if the
following two conditions hold: 
\begin{enumerate}[1.]
\item For each $\eta>0$ and rational $t\ge 0$, there is exists a compact set $\Gamma_{\eta,t} \subset E$ such that  
$$\inf_n P (X_n(t) \in \Gamma_{\eta, t})  \geq 1- \eta. $$ 
\item For every $\eta >0$ and $T>0$, there exists $\delta > 0$ such that 
$$\sup_{n} P(w'(X_n, \delta, T) \geq \eta) < \eta$$
where   
$$w'(X, \delta, T) \eqdef  \inf_{\pi} \max_i \sup_{a, b \in [t_i, t_{i+1})}
|X(a)- X(b)|   $$
where $\pi$ ranges over all partitions of $[0,T]$ satisfying $t_{i+1} - t_{i}
> \delta$ for all $i\ge 0$.   
\end{enumerate}     \label{Precompact}
\end{thm}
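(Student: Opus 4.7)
This theorem is a classical Prokhorov-type characterization of relative compactness in the Skorokhod space $D_E[0,\infty)$, so the plan is to combine Prokhorov's theorem (identifying relative compactness of laws with tightness) with an Arzel\`a--Ascoli-type characterization of compact subsets of $(D_E[0,\infty), d_{\text{Sk}})$. The characterization I would rely on is that a set $K \subset D_E[0,\infty)$ has compact closure if and only if (a) for each rational $t \ge 0$ the section $\{x(t): x \in K\}$ has compact closure in $E$, and (b) for every $T > 0$, $\lim_{\delta \downarrow 0} \sup_{x \in K} w'(x,\delta,T) = 0$. Given this, the theorem becomes a routine ``tightness factors through sections and moduli'' statement.

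For the sufficient direction, I would proceed as follows. Fix $\eta > 0$. Using condition~1 with parameter $\eta 2^{-(t+1)}$ for each rational $t$ (enumerated as $\{t_j\}$), choose compact $\Gamma_{\eta 2^{-(j+1)}, t_j} \subset E$. Using condition~2, choose for each $m \ge 1$ a $\delta_m > 0$ so that $\sup_n P(w'(X_n, \delta_m, m) \ge 2^{-m}) < \eta 2^{-m}$. Then define
\[
\Gamma \eqdef \Bigl\{ x \in D_E[0,\infty) : x(t_j) \in \Gamma_{\eta 2^{-(j+1)}, t_j} \text{ for all } j,\ w'(x,\delta_m, m) \le 2^{-m} \text{ for all } m \Bigr\}.
\]
The Arzel\`a--Ascoli analog above shows $\Gamma$ has compact closure, and a union bound gives $\inf_n P(X_n \in \Gamma) \ge 1 - 2\eta$. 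Prokhorov's theorem then yields relative compactness of the laws.

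For the necessary direction, assume $\{X_n\}$ is relatively compact, hence tight by Prokhorov. Given $\eta > 0$ and a rational $t \ge 0$, pick a compact $K_\eta \subset D_E[0,\infty)$ with $\inf_n P(X_n \in K_\eta) \ge 1 - \eta$. The section $\Gamma_{\eta,t} \eqdef \{x(t): x \in K_\eta\}$ is the image of $K_\eta$ under the coordinate projection, which is continuous at continuity points of the paths; a standard argument (passing to a slightly enlarged compact set to handle the jumps at $t$) yields condition~1. Condition~2 follows from $K_\eta$ being totally bounded in $d_{\text{Sk}}$: compactness in the Skorokhod topology forces $\sup_{x \in K_\eta} w'(x,\delta,T) \to 0$ as $\delta \downarrow 0$, and then $P(w'(X_n,\delta,T) \ge \eta) \le P(X_n \notin K_\eta) \le \eta$ for $\delta$ small enough.

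The main obstacle is the Arzel\`a--Ascoli-type characterization of compact subsets of $D_E[0,\infty)$ itself, which is where all the Skorokhod-topology subtleties hide; in particular, one must handle the fact that projections $x \mapsto x(t)$ are not continuous on $D_E$ and that the modulus $w'$ (rather than the uniform modulus) is the right object because of jumps. Once that characterization is in hand, both directions reduce to bookkeeping with Prokhorov's theorem, and I would simply cite the compactness characterization (e.g., Theorem~6.3 in Chapter~3 of \cite{Kurtz86}) rather than reprove it.
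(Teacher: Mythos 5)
You should know that the paper does not prove Theorem \ref{Precompact} at all: it is quoted verbatim (as the authors say, it is Theorem 7.2 on page 128 of \cite{Kurtz86}) and used as a black box, so there is no in-paper argument to compare against. Your sketch is the standard textbook derivation of that result and is essentially sound: sufficiency by assembling a path set $\Gamma$ from the rational-time section conditions and the moduli $w'(\cdot,\delta_m,m)\le 2^{-m}$, invoking the Arzel\`a--Ascoli-type characterization of compact subsets of $D_E[0,\infty)$ (Theorem 6.3, Chapter 3 of \cite{Kurtz86}) to see that $\overline{\Gamma}$ is compact, a union bound for $\inf_n P(X_n\in\Gamma)\ge 1-2\eta$, and then Prokhorov; necessity by Prokhorov's converse plus applying that same characterization to the compact set $K_\eta$. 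Two small points of hygiene: for the necessity of condition 1 you do not need any discussion of (dis)continuity of the projections $x\mapsto x(t)$ or of enlarging the section set --- the characterization applied directly to $K_\eta$ already supplies a compact $\Gamma_t\supset\{x(t):x\in K_\eta\}$ for every rational $t$, which is cleaner than the aside you give; and both uses of Prokhorov silently require $D_E[0,\infty)$ to be complete and separable, which holds because $E$ is assumed complete and separable (also take the tightness level $\eta/2$ if you want the strict inequality in condition 2). Since the only genuinely hard ingredient (the compactness characterization in the Skorokhod topology) is cited rather than proved, your write-up sits at exactly the same level of rigor as the paper's own treatment, which is a citation; if you wanted a self-contained proof you would have to prove that characterization, which neither you nor the paper attempts.
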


Unfortunately the conditions of Theorem \ref{Precompact} do not
hold in general for our set of processes  $\{ (X_{\text{crp}}^{\pi_n}, Z_{\text{crp}}^{\pi_n})\} $ over the skorohod space $D_{\RR^{2d}} [ 0, \infty)$.
To see this,  we note the following two facts:  
\begin{enumerate}[1.]
\item For jump processes whose jump sizes are bounded below, for example by integer values in our present setting,  for small enough $\eta>0$ we have
\[	
	\{w'((X,Z),\delta,T)<\eta\} = \{w'((X,Z), \delta, T)= 0\},
\]
\item  The event $ w'((X,Z), \delta, T)= 0$  can be achieved if and  only if 
 the minimum time between jumps of $(X, Z)$ is greater than $\delta$.  
 \end{enumerate}
 To understand the second statement, simply note that if the minimum time between jumps is less than $\delta$, then for any partition $\pi$ satisfying $t_{i+1}-t_i>\delta$ for all $i$, the process must change by at least the smallest jump size ($\min_k |\zeta_k|$ in our case) in \textit{some} interval of the partition. Conversely, if the minimum holding time of the process is  greater than $\delta$, then we achieve a value of $0$ for $w'$ by choosing $\pi$ so that the jump times correspond with a subset of the partition times $t_i$. 

The following example explicitly shows that Theorem \ref{Precompact} does not  hold for our choice of $\{ (X_{\text{crp}}^{\pi_n}, Z_{\text{crp}}^{\pi_n})\} $ with $E = \RR^{2d}$.  Essentially the same argument would work for any model considered in this paper.

 \begin{example}\label{ex:408957}
   Consider the  chemical reaction network
 \begin{align*}
A \to 2A,
\end{align*}
which models increases in $A$ as a counting process with a linear intensity (i.e. a linear birth process).  We consider the corresponding coupled processes $(X_{\text{sc}},Z_{\text{sc}})$ and $(X_{\text{crp}}^{\pi_n},Z_{\text{crp}}^{\pi_n})$ with 
\[
	\lambda_1(x) = \theta x, ~~~~~ \widetilde \lambda_1(x) = (\theta + h)x,
\]
and initial condition 
\[
	X_{\text{sc}}(0) = Z_{\text{sc}}(0) =X_{\text{crp}}^{\pi_n}(0) = Z_{\text{crp}}^{\pi_n}(0) >0.
\]
For any $\delta>0$, the probability that the processes $X_{\text{sc}}$ and $Z_{\text{sc}}$  jump simultaneously in the time period $[0,\delta]$ \textit{and} that their simultaneous jump is the first jump for both processes is
\[
	\alpha_\delta \eqdef \frac{\theta}{\theta+h} \left( 1 - e^{-(\theta + h)X(0) \delta}\right)>0.
\]
 By the arguments we made in the proof above, for any $\epsilon >0$  there exists some $M_\epsilon$
such that if  $n > M_\epsilon$, then with probability greater than $\alpha_\delta -
\epsilon$,  both $X_{\text{crp}}^{\pi_n}$ and $Z_{\text{crp}}^{\pi_n}$ will also make
a first jump in $[0,\delta]$. However, with a probability of one, $X_{\text{crp}}^{\pi_n}$ and $Z_{\text{crp}}^{\pi_n}$   jump at different times.  
  Hence, when they jump in the time interval $[0,\delta)$,  we have 
\begin{align*}
	\sup_{a,b\in [0,\delta)} | (X^{\pi_n}_{\text{crp}},Z^{\pi_n}_{\text{crp}})(a)  -    (X^{\pi_n}_{\text{crp}},Z^{\pi_n}_{\text{crp}})(b)| \ge 1.
\end{align*}
 This in particular means that for
any $0< \eta < 1$, 
 \[
 	\sup _n P (w'( (X_{\text{crp}}^{\pi_n},Z_{\text{crp}}^{\pi_n}) , \delta, T) \geq \eta)
 \geq \alpha_\delta,
 \]
 and the laws of $\{ (X_{\text{crp}}^{\pi_n}, Z_{\text{crp}}^{\pi_n})\}$ fail to be relatively compact.  
\end{example}

\subsection{Weak Convergence in the product Skorohod topology}
\label{sec:weak_conv}

Example \ref{ex:408957} demonstrates that the measures induced by $(X_{\text{crp}}^{\pi_n}, Z_{\text{crp}}^{\pi_n})$ on $D_{\R^{2d}}[0,\infty)$ are not relatively compact.  Hence, the processes $(X_{\text{crp}}^{\pi_n}, Z_{\text{crp}}^{\pi_n})$ do not converge weakly to $(X_\text{sc},Z_{\text{sc}})$ in  $D_{\R^{2d}}[0,\infty)$.  However, in this section we demonstrate that there is convergence  in
$$\cD :=D_{\RR^d}[0, \infty) \times D_{\RR^d}[0, \infty)$$
endowed with the product Skorohod topology.

As is usual, the main work that remains to be done is in showing that $\{ (X_{\text{crp}}^{\pi_n}, Z_{\text{crp}}^{\pi_n})\}$ is relatively compact in the appropriate topological space.  

\begin{prop}\label{prop:30498}
Let $\cD   \eqdef D_{\RR^d}[0, \infty) \times D_{\RR^d}[0, \infty)$, with the product Skorohod topology.  The family of processes   $\{ (X_{\text{crp}}^{\pi_n}, Z_{\text{crp}}^{\pi_n})\}$ is relatively compact in  $\cD$.
\end{prop}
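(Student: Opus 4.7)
The plan rests on a structural observation that makes the proof almost immediate once stated correctly. In the product Skorohod topology on $\cD$, relative compactness of a family $\{(\xi_n, \eta_n)\}$ is equivalent to relative compactness of each marginal family $\{\xi_n\}$ and $\{\eta_n\}$ in $D_{\RR^d}[0,\infty)$ separately. This is the standard product-space criterion (see, e.g., \cite{Kurtz86}) and is precisely what distinguishes the situation here from Example \ref{ex:408957}: in the product topology each coordinate is allowed its own time reparameterization, so the obstruction identified there--namely, that the two coordinates jump at slightly different times--is no longer an obstacle.

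With this reduction in hand, the key point is the identity already recorded just after \eqref{localCRP}: irrespective of $n$, the marginal $X^{\pi_n}_{\text{crp}}$ has the same law as the nominal process $X$, and likewise $Z^{\pi_n}_{\text{crp}} \eqdist Z$. Consequently, each marginal family $\{X^{\pi_n}_{\text{crp}}\}_{n\ge 1}$ and $\{Z^{\pi_n}_{\text{crp}}\}_{n\ge 1}$ is a constant family in distribution, so relative compactness of each reduces to tightness of a single Borel probability measure on the Polish space $D_{\RR^d}[0,\infty)$. By Prohorov's theorem this is automatic.

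If one prefers to verify the two conditions of Theorem~\ref{Precompact} directly (applied to the $D_{\RR^d}[0,\infty)$-valued family $\{X^{\pi_n}_{\text{crp}}\}$, and analogously to $\{Z^{\pi_n}_{\text{crp}}\}$), both follow quickly from non-explosivity. Condition~1: for each $t$, the $\Z^d$-valued random variable $X(t)$ is proper, so its distribution is supported on a compact set up to arbitrarily small mass. Condition~2: by non-explosivity, $X$ has a.s.\ only finitely many jumps in $[0,T]$, and hence an a.s.\ positive minimum inter-jump spacing; choosing $\delta>0$ smaller than that spacing with high probability makes $w'(X,\delta,T) = 0$ with probability greater than $1-\eta$, and a uniform choice works because the family is constant in distribution.

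Combining the two reductions finishes the proof. There is no real obstacle beyond invoking the product-space characterization; the subtlety is conceptual rather than technical, and lies entirely in noting that the product Skorohod topology on $D_{\RR^d}[0,\infty) \times D_{\RR^d}[0,\infty)$ is strictly weaker than the Skorohod topology on $D_{\RR^{2d}}[0,\infty)$, so the lower bound $\alpha_\delta$ constructed in Example~\ref{ex:408957} no longer applies.
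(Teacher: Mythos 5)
Your proof is correct, and its core is the same reduction the paper uses: pass to the marginals, exploit the fact that $X^{\pi_n}_{\text{crp}}\eqdist X$ and $Z^{\pi_n}_{\text{crp}}\eqdist Z$ for every $n$, and recover joint tightness from marginal tightness because a product $A^\epsilon\times B^\epsilon$ of compacts is compact in the product topology. Where you diverge is in how marginal tightness is certified. The paper verifies the two conditions of Theorem \ref{Precompact} for the (constant-in-law) marginal families, handling the modulus condition by a localization argument: on the event that all intensities stay below $M$, it dominates $w'(X,\delta,T)$ by the modulus of a rate-$MR$ Poisson process and then sends $M\to\infty$ using non-explosivity. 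You instead short-circuit this by observing that each marginal family is a single Borel probability measure on the Polish space $D_{\RR^d}[0,\infty)$, whose tightness is automatic (Ulam's theorem, or the necessity direction of Prohorov applied to a singleton); this is a legitimately more economical route, and it makes transparent that the only substantive hypothesis is non-explosivity, which is what puts the paths in $D_{\RR^d}[0,\infty)$ in the first place. Your fallback direct verification is also sound and is essentially the paper's own earlier observation about $w'$ for pure jump processes (that $w'=0$ exactly when the inter-jump spacing, including the time to the first jump, exceeds $\delta$), argued via a.s. finiteness of the number of jumps rather than via the Poisson comparison. Finally, your opening remark—that in the product topology each coordinate gets its own time deformation, so the obstruction of Example \ref{ex:408957} disappears—is exactly the right conceptual point and matches the paper's motivation for working in $\cD$ rather than $D_{\RR^{2d}}[0,\infty)$.
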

\begin{proof}

By Theorem 2.2 on page 104 of \cite{Kurtz86}, it is enough to show that 
for any $\epsilon > 0$,  
 there exists a compact set $C^\epsilon\in \cD$ such that 
$$ \inf_n P ((X_{\text{crp}}^{\pi_n}, Z_{\text{crp}}^{\pi_n}) \in C^\epsilon) > 1- \epsilon.$$ 
To show this, we consider the marginal processes, which we recall
satisfy  $X \sim X_{\text{crp}}^{\pi_n}$ and  $  Z\sim Z_{\text{crp}}^{\pi_n}$ for each $n \ge 1$. Note that if $A^\epsilon, B^\epsilon \subset D_{\RR^d}[0,\infty)$ are compact, then the inequalities
\begin{align}
\begin{split}
	P (X \in A^\epsilon) &= P (X_{\text{crp}}^{\pi_n} \in A^\epsilon) > 1- \frac{\epsilon}2\\
 	 P (Z \in B^\epsilon)&= P (Z_{\text{crp}}^{\pi_n} \in B^\epsilon) > 1- \frac{\epsilon}2
 \end{split}
	 \label{eq:490875}
\end{align}
imply the inequality
\[
P ((X_{\text{crp}}^{\pi_n}, Z_{\text{crp}}^{\pi_n}) \in A^\epsilon\times B^\epsilon) > 1- \epsilon,
\]
with $A^\epsilon\times B^\epsilon$ compact in $\cD$.
Hence, it is sufficient to simply prove the pair of  inequalities \eqref{eq:490875} for the marginal processes, which  live in $D_{\R^d}[0,\infty)$.  However, inequality \eqref{eq:490875} holds so long as the marginal processes are tight (in $D_{\R^d}[0,\infty)$), and so Theorem \ref{Precompact} may be used.
 Therefore it
suffices to show that $X$ and $Z$ both separately satisfy the conditions in
Theorem \ref{Precompact}, which we do now.

Since $X$ is a nonexplosive pure jump process, it clearly
passes the first condition of Theorem \ref{Precompact}. Also, recall that $X$ is constructed with $R\in \Z_{>0}$ Poisson processes, one for each jump direction.  Then for any $T >0$ and $M>0$, 
\begin{align}
\begin{split}
P(w'(X, \delta, T) > 0  )   &\leq P\left(w'(X, \delta, T) > 0, \sup_{k=1,..,R, s <
T} \lambda_k(X(s)) \leq M\right)\\
&\hspace{.3in} +   P\left( \sup_{k=1,..,R, s <T} \lambda_k(X(s)) > M \right) \\
&\le P \left(w'(Y(MR \cdot), \delta, T) > 0\right) + P\left( \sup_{k=1,..,R, s <T}
\lambda_k(X(s)) > M\right)
\end{split}
\end{align}
where  $Y(M R\cdot)$ is a Poisson process with rate $MR$. Since $X$ is non explosive, we
may take $M$ large enough to control the second piece, and for this
$M$ we can choose $\delta$ small enough to control the first
piece. That is,
$\lim_{\delta \to 0 }P( w'(X, \delta, T) > 0 ) = 0$.
This tells us that $X$ also passes the second condition of Theorem \ref{Precompact}. The same procedure works for $Z$. Thus,  $\{ (X_{\text{crp}}^{\pi_n}, Z_{\text{crp}}^{\pi_n}) \} $ is relatively compact in $\cD$ with the product
topology.  
\end{proof}

With this proposition at our hand, we can prove the main result of our paper.

\begin{thm}
Suppose $X$ and $Z$ are both non-explosive, c\`adl\`ag process as given above.
Let $D_{\RR^d}[0, \infty)$ be the Skorohod Space as defined in \cite{Kurtz86}. Consider  the
product topology on 
$$\cD :=D_{\RR^d}[0, \infty) \times D_{\RR^d}[0, \infty).$$
Also, let $\pi_n = \{s_j^n\} $ be a sequence of partitions of
$[0,\infty)$ such that  
\[
 	\text{mesh}(\pi_n) = \max_{j < \infty} (s_{j}^n - s_{j-1}^n) \to 0, \quad \text{as } n \to \infty.
\]  
Then for all $f : \cD \to \RR$ that are bounded and
continuous,
\[
	\E[f(X_{\text{crp}}^{\pi_n}, Z_{\text{crp}}^{\pi_n})] \to \E[f(X_{\text{sc}}, Z_{\text{sc}})], \ \ \ \text{as } n \to \infty.
\]
That is,  $(X_{\text{crp}}^{\pi_n}, Z_{\text{crp}}^{\pi_n} ) \to  (X_{\text{sc}}, Z_{\text{sc}})$, as $n \to \infty$, weakly in the product Skorohod topology. 
\label{Goal}
\end{thm}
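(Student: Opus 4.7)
The plan is to combine the relative compactness established in Proposition \ref{prop:30498} with the finite-dimensional convergence of Proposition \ref{Theorem} in a standard Prokhorov-style argument, identifying any subsequential limit via its finite-dimensional distributions at generic times.

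First, I would invoke Proposition \ref{prop:30498} to assert that $\{(X_{\text{crp}}^{\pi_n}, Z_{\text{crp}}^{\pi_n})\}$ is relatively compact in $\cD$. Hence every subsequence admits a further subsequence $\{n_j\}$ along which $(X_{\text{crp}}^{\pi_{n_j}}, Z_{\text{crp}}^{\pi_{n_j}}) \Rightarrow (U,V)$ weakly in $\cD$ for some $\cD$-valued random element $(U,V)$. The proof reduces to showing that every such subsequential limit satisfies $(U,V) \eqdist (X_{\text{sc}}, Z_{\text{sc}})$, for then relative compactness forces the full sequence to converge to $(X_{\text{sc}}, Z_{\text{sc}})$.

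Next, I would identify the limit by matching its finite-dimensional distributions with those of $(X_{\text{sc}}, Z_{\text{sc}})$. Both $(X_{\text{sc}}, Z_{\text{sc}})$ and $(U,V)$ are c\`adl\`ag, so each admits at most countably many \emph{fixed} discontinuity times (points $t$ with $P(\text{jump at }t)>0$); let $T^\star \subset [0,\infty)$ denote the union of these two countable sets. For any $K \in \Z_{\ge 0}$ and any $\mathbf{t}=(t_1,\dots,t_K)$ with each $t_i \in [0,\infty) \setminus T^\star$, the coordinate evaluation map $\omega \mapsto \omega(\mathbf{t})$ on $\cD$ is almost surely continuous under both the law of $(U,V)$ and the law of $(X_{\text{sc}}, Z_{\text{sc}})$ in the product Skorohod topology. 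The continuous mapping theorem then yields
\[
(X_{\text{crp}}^{\pi_{n_j}}, Z_{\text{crp}}^{\pi_{n_j}})(\mathbf{t}) \Rightarrow (U,V)(\mathbf{t}),
\]
while Proposition \ref{Theorem} independently gives $(X_{\text{crp}}^{\pi_n}, Z_{\text{crp}}^{\pi_n})(\mathbf{t}) \Rightarrow (X_{\text{sc}}, Z_{\text{sc}})(\mathbf{t})$ for the whole sequence. Uniqueness of weak limits forces $(U,V)(\mathbf{t}) \eqdist (X_{\text{sc}}, Z_{\text{sc}})(\mathbf{t})$ for every such $\mathbf{t}$.

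Finally, since $[0,\infty) \setminus T^\star$ is dense in $[0,\infty)$ and the finite-dimensional distributions at a dense set of times determine the law of a c\`adl\`ag process, we conclude $(U,V) \eqdist (X_{\text{sc}}, Z_{\text{sc}})$. Combined with relative compactness, this gives weak convergence of the full sequence in $\cD$. The only real subtlety is the continuity issue for the evaluation map in the Skorohod topology: one must restrict to times outside the (countable) set of fixed jump times of the limit laws before the continuous mapping theorem may be applied. Once that caveat is dispatched, the argument is routine.
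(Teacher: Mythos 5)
Your proposal is correct and takes essentially the same route as the paper: relative compactness from Proposition \ref{prop:30498}, followed by identification of every subsequential limit through the finite-dimensional convergence supplied by Proposition \ref{Theorem} and Corollary \ref{Cor}. The only difference is in how the identification step is finished: you argue via almost-sure continuity of the evaluation maps at times outside the countable set of fixed jump times and the fact that finite-dimensional distributions on a dense set of times determine a c\`adl\`ag law, whereas the paper invokes a monotone class argument together with the remark that the topology of $D_{\RR^{2d}}[0,\infty)$ is finer than that of $\cD$ --- a cosmetic, not substantive, distinction.
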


We would like to emphasize that  the test function $f$ considered above maps a path in  $\cD$ to $\RR$.   The test functions for Proposition  \ref{Theorem}, on the other hand, are evaluated at discrete time points.

Now we put everything together to prove Theorem
\ref{Goal}.
\begin{proof}[Proof of Theorem \ref{Goal}] 


By Proposition \ref{prop:30498} it is sufficient to show that every convergent (in distribution) subsequence of $(X_{\text{crp}}^{\pi_n}, Z_{\text{crp}}^{\pi_n} )$ converges in distribution to $(X_{\text{sc}}, Z_{\text{sc}})$.
By Corollary \ref{Cor}, it is sufficient to show that if
\begin{align}
	\E \left[  \prod_{i=0}^{m_1} f_i(X_{\text{sc}}(s_i)) \prod_{j=0}^{m_2}g_j( Z_{\text{sc}}(t_j))
     \right]  &=  \E \left[  \prod_{i=0}^{m_1} f_i(X^*(s_i)) \prod_{j=0}^{m_2}g_j( Z^*(t_j)) 
     \right]
      \label{marginal}
  \end{align}
   for all $\{ s_i\}, \{ t_j\} \subset [0,\infty),$ and $f_i, g_i \in \overline C(\RR^d)$ (bounded and continuous functions), 
then $\E[h(X_{sc}, Z_{sc})]= \E[h(X^*, Z^*)]$ for any bounded and continuous function $h: \cD \to \R$.  A standard monotone  class argument (for example, see page 132 in \cite{Kurtz86}) shows that \ref{marginal} is more than enough to guarantee that $\E[h(X_{\text{sc}}, Z_{\text{sc}})] = \E[h(X^*, Z^*)]$ for all 
 $h$ continuous with respect to $D_{\RR^{2d}}[0, \infty).$  From the definition of the Skorohod metric, it is straightforward to show that the  topology of $D_{\RR^{2d}}[0, \infty)$ is finer than that of $\cD$. This in particular means that the continuous functions with respect to $\cD$ are a subset of those of $D_{\RR^{2d}}[0, \infty)$.  Thus,  we may conclude that $\E[h(X_{\text{sc}}, Z_{\text{sc}})] = \E[h(X^*, Z^*)]$ if $h$ is continuous with respect to $\cD$, and the result is shown.
\end{proof}

While the results presented so far pertain to the specific couplings found in the numerical analysis literature, a slightly more general theorem can be achieved by following an identical line of reasoning.  
\begin{thm}\label{general} 
For $i \in \{1,2,3\}$ and $k \in \{1,\dots,R\}$, let $r_{ik}:\RR^d\times \RR^d \to \RR_{\ge 0}$  be a non-negative measurable function.  Suppose that $\{\pi_n\}$ is a sequence of partitions  of $[0,\infty)$ for which $\text{mesh}(\pi_n) \to 0$, as $n \to \infty$.  Define $(X_{\text{sc}},Z_{\text{sc}})$ and   $(X^{\pi_n}_{\text{crp}},Z^{\pi_n}_{\text{crp}})$ via
\begin{align*}
X_{\text{sc}}(t) = X(0) + &\sum_{k=1}^R\Bigg\{ Y_{1k} \left(
 \int_0^t r_{1k}(X_{\text{sc}}, Z_{\text{sc}})(s)  ds \right)  +   Y_{2k}
\left( \int_0^t r_{2k}(X_{\text{sc}}, Z_{\text{sc}})(s)  ds \right)
\Bigg\}  \zeta_k \\
Z_{\text{sc}}(t) = Z(0) + &\sum_{k=1}^R \Bigg\{ Y_{1k} \left( \int_0^t
  r_{1k}(X_{\text{sc}}, Z_{\text{sc}})(s)  ds
\right) 
 +  Y_{3k} \left( \int_0^t   r_{3k}(X_{\text{sc}}, Z_{\text{sc}})(s) ds  \right)  \Bigg\} \zeta_k,
\end{align*}
and 
\begin{align*}
 X_{\text{crp}}^{\pi_n}(t)  &= X(0) + \sum_{m=0}^{\infty} \sum_{k =1}^R Y_{km}^n \left(
  \int_{t \wedge  s_m}^{t \wedge s_{m+1}} \{ r_{1k}(X_{\text{crp}}^{\pi_n}, Z_{\text{crp}}^{\pi_n})(s) +  r_{2k}(X_{\text{crp}}^{\pi_n}, Z_{\text{crp}}^{\pi_n})(s) \} ds  \right)
\zeta_k \\
 Z_{\text{crp}}^{\pi_n} (t) &= Z(0) + \sum_{m=0}^{\infty}  \sum_{k =1}^R Y_{km}^n \left(
   \int_{t \wedge  s_m}^{t \wedge s_{m+1}}  \{r_{1k}(X_{\text{crp}}^{\pi_n}, Z_{\text{crp}}^{\pi_n})(s) +  r_{3k}(X_{\text{crp}}^{\pi_n}, Z_{\text{crp}}^{\pi_n})(s)\} ds  \right)
\zeta_k,
\end{align*}
where all notation is as before.
Finally, we suppose that all processes are non-explosive.  Then, $(X_{\text{crp}}^{\pi_n}, Z_{\text{crp}}^{\pi_n} ) \to  (X_{\text{sc}}, Z_{\text{sc}})$, as $n \to \infty$, weakly in the product Skorohod topology. 
\end{thm}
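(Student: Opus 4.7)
The plan is to follow the same three-stage strategy used for Theorem \ref{Goal}: first construct a coupling between the split-coupled and local-CRP processes on a common probability space, then establish convergence of finite-dimensional distributions along the lines of Proposition \ref{Theorem} and Corollary \ref{Cor}, and finally upgrade to weak convergence in $\cD$ via the relative compactness argument of Proposition \ref{prop:30498} combined with the monotone class / topology comparison at the end of the proof of Theorem \ref{Goal}. Throughout, the roles of $\lambda_k$ and $\widetilde\lambda_k$ from the earlier development are played by the total rates $r_{1k}+r_{2k}$ and $r_{1k}+r_{3k}$, respectively, while $r_{1k}=\lambda_k\wedge\widetilde\lambda_k$ is played by the given $r_{1k}$.

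For the coupling, I would inductively construct independent unit-rate Poisson processes $\{Y_{ikm}^n\}$ from the driving processes $\{Y_{km}^n\}$ of \eqref{localCRP2} together with a family of independent augmentation processes, defining
\[
\cT_{ikm} \eqdef r_{ik}(X_{\text{sc}}^{\pi_n}(s_m), Z_{\text{sc}}^{\pi_n}(s_m))\cdot \Delta_m(\pi_n), \qquad T_{ikm}^{\text{sc}} \eqdef \int_{s_m}^{s_{m+1}} r_{ik}(X_{\text{sc}}^{\pi_n}, Z_{\text{sc}}^{\pi_n})(s)\, ds,
\]
and
\[
T_{km}^{\text{crp}} \eqdef \left(\int_{s_m}^{s_{m+1}} (r_{1k}+r_{2k})(X_{\text{crp}}^{\pi_n}, Z_{\text{crp}}^{\pi_n})(s)\, ds\right) \vee \left(\int_{s_m}^{s_{m+1}} (r_{1k}+r_{3k})(X_{\text{crp}}^{\pi_n}, Z_{\text{crp}}^{\pi_n})(s)\, ds\right).
\]
The construction of $Y_{1km}^n$ from $Y_{km}^n$ on $[0,\cT_{1km}]$, spliced with an independent augmentation thereafter, is verbatim. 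The case split for $Y_{2km}^n$ versus $Y_{3km}^n$ is now based on the sign of $r_{2k}(X_{\text{sc}}^{\pi_n}(s_m), Z_{\text{sc}}^{\pi_n}(s_m)) - r_{3k}(X_{\text{sc}}^{\pi_n}(s_m), Z_{\text{sc}}^{\pi_n}(s_m))$, which replaces the comparison of $\lambda_k$ and $\widetilde\lambda_k$ in the original. The strong Markov property of $Y_{km}^n$ combined with the independence of the augmentations again yields that the $Y_{ikm}^n$ are mutually independent unit-rate Poisson processes.

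Given this setup, the analogs of Lemmas \ref{LetmeGo}, \ref{LetmeGo2}, and \ref{Proper} carry over essentially verbatim, since their proofs used only the piecewise constancy of the rates between jumps, the definitions of the random times above, and the coupling identities among the Poissons. The proof of Proposition \ref{Theorem} then transfers: the localization event $B_{M,n}$ now bounds each of $(r_{1k}+r_{2k})(X_{\text{crp}}^{\pi_n},Z_{\text{crp}}^{\pi_n})$, $(r_{1k}+r_{3k})(X_{\text{crp}}^{\pi_n},Z_{\text{crp}}^{\pi_n})$, and $r_{ik}(X_{\text{sc}}^{\pi_n},Z_{\text{sc}}^{\pi_n})$ by $M$ on $[0,T]$; the assumed nonexplosivity of all four processes combined with the monotone convergence theorem gives $P(B_{M,n}^C)\to 0$ as $M\to\infty$, and the elementary Poisson-tail inequalities $P(W(\Lambda)>1)\le \Lambda^2$ and $P(W(\Lambda)>0)\le \Lambda$ yield the same $O(\delta_n)$ bound as in \eqref{Last}. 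Corollary \ref{Cor} follows, Proposition \ref{prop:30498} goes through unchanged because its proof used only nonexplosivity and the integer-valued jump structure of the marginals, and a final monotone class / topology comparison exactly as in the proof of Theorem \ref{Goal} completes the argument.

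The step I expect to give the most trouble is the generalized case split for $Y_{2km}^n$ and $Y_{3km}^n$. In the original construction one of $r_{2k}, r_{3k}$ vanishes identically at $s_m$, so the corresponding ``augmented'' Poisson is never consulted on a positive time interval, and the matching of SC and local-CRP firings under the event $\{H_{m,n}\le 1\}$ is automatic. In the generalized setting both quantities may be strictly positive, and one must verify carefully that the unique firing of $Y_{km}^n$ in a good subinterval is still matched by a unique firing in the SC sequence in the same reaction channel. This is a careful bookkeeping exercise of tracking which of $Y_{2km}^n, Y_{3km}^n$ inherits increments of $Y_{km}^n$ on which time intervals and which is drawn from an independent augmentation, but no genuinely new probabilistic ideas beyond those already used in the proof of Proposition \ref{Theorem} are needed.
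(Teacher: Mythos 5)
You have put your finger on exactly the right step, but it is not a bookkeeping exercise: the generalized case split genuinely fails, and in fact Theorem \ref{general} as stated (arbitrary non-negative measurable $r_{ik}$) is false, so no construction can close the gap. Take $d=R=1$, $\zeta_1=1$, $X(0)=Z(0)=0$, and $r_{11}\equiv 0$, $r_{21}\equiv r_{31}\equiv 1$. Then for every $n$ both coordinates of the local-CRP pair evaluate the \emph{same} processes $Y^n_{1m}$ at the \emph{same} clock $\int_{t\wedge s_m}^{t\wedge s_{m+1}}1\,ds$, so $X^{\pi_n}_{\text{crp}}\equiv Z^{\pi_n}_{\text{crp}}$ pathwise (one unit-rate Poisson path duplicated), whereas $(X_{\text{sc}},Z_{\text{sc}})=(Y_{21},Y_{31})$ is a pair of independent unit-rate Poisson processes. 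Testing against the bounded functional $h(x,z)=\exp\bigl(-\int_0^1 \bigl(|x(s)-z(s)|\wedge 1\bigr)ds\bigr)$, which is continuous on $\cD$, gives $\E[h(X^{\pi_n}_{\text{crp}},Z^{\pi_n}_{\text{crp}})]=1$ for all $n$ while $\E[h(X_{\text{sc}},Z_{\text{sc}})]<1$; there is no weak convergence. The mechanism is precisely the one your construction cannot absorb: when $r_{2k}$ and $r_{3k}$ are both positive, the two CRP clocks both sweep past the single firing epoch of $Y^n_{km}$, so one firing moves \emph{both} CRP coordinates, while in the split system a single firing assigned to $Y_{2k}$ or $Y_{3k}$ moves only one. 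Over a fresh subinterval started from $(x,z)$ the probability that both CRP coordinates jump through channel $k$ is of order $\bigl((r_{1k}+r_{2k})\wedge(r_{1k}+r_{3k})\bigr)(x,z)\,\Delta_m$, not $r_{1k}(x,z)\,\Delta_m$, so the mismatch with the split pair occurs at first order in $\Delta_m$ and survives the limit; the analogues of Lemmas \ref{LetmeGo2} and \ref{Proper} are simply false, no matter how you distribute the increments of $Y^n_{km}$ between $Y^n_{2km}$ and $Y^n_{3km}$.

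The argument (which is also all the paper offers, since Theorem \ref{general} is justified only by the phrase ``identical line of reasoning'') can be salvaged only under additional hypotheses left implicit. One needs $r_{2k}(x,z)\wedge r_{3k}(x,z)=0$ at all reachable states, i.e.\ $r_{1k}=(r_{1k}+r_{2k})\wedge(r_{1k}+r_{3k})$, so that a single epoch inside the common window corresponds to a simultaneous jump and an epoch in the residual window to a jump of the faster coordinate only. Even then there is a second obstruction you did not flag: since the generalized rates may depend on both coordinates, a jump of $X^{\pi_n}_{\text{crp}}$ inside a subinterval can \emph{increase} the rate $r_{1k}+r_{3k}$ driving the clock of $Z^{\pi_n}_{\text{crp}}$, which may then overtake the same epoch of $Y^n_{km}$ before $s_{m+1}$ --- a ``catch-up'' double jump produced by a single firing, again with probability of order $\Delta_m$ per subinterval. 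This cannot happen in Theorem \ref{Goal} because there $\widetilde\lambda_k(Z_{\text{crp}})$ is unchanged by jumps of $X_{\text{crp}}$. So the verbatim transfer of Lemmas \ref{LetmeGo}--\ref{Proper} requires both that the common rate equal the minimum of the two total rates and that each total rate be unaffected by jumps of the other coordinate --- conditions under which the ``general'' statement essentially collapses back to the setting of Theorem \ref{Goal}. Your final paragraph's assertion that only careful bookkeeping and no new ideas are needed is therefore the precise point at which the proposal (and the paper's own claim of full generality) breaks down.
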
 

Proposition \ref{Goal} is therefore a special case of Theorem \ref{general} in which each $r_{ik}$ depends on $\lambda_k$ and $\tilde \lambda_k$ in a  specific way.    

\section{Numerical examples}
\label{sec:examples}
In this section, we provide two numerical examples demonstrating the convergence of the local-CRP coupling to that of the split coupling.  Based upon our motivation in terms of variance reduction, we  focus upon the convergence of the variance between the coupled processes.

\begin{example}\label{example1}
We begin by considering a basic model of gene transcription and translation, where the model tracks the counts for the numbers of genes ($G$), mRNA molecules ($M$), and proteins ($P$) in the system.   We suppose that the system can undergo the following possible reactions, 
 \begin{align*}
 	G  &\rightarrow G+M\tag{R1}\\
	M  &\rightarrow   M+P\tag{R2}\\
	M &\rightarrow \emptyset\tag{R3}\\
	P  &\rightarrow  \emptyset,\tag{R4}
\end{align*}
where, for example, reaction (R1) implies a net change to the system of one extra mRNA molecule.  Since no reaction changes the number of genes present in the system, we may take that to be a fixed quantity.  Hence, there are two dynamic components, and the stochastic model for this system is
\begin{align*}
	X(t) = X(0) &+ Y_1\left( \int_0^t \lambda_1(X(s)) ds\right) \left[\begin{array}{c} 1 \\ 0\end{array}\right] +  Y_2\left( \int_0^t \lambda_2(X(s)) ds\right) \left[\begin{array}{c} 0 \\ 1\end{array}\right]\\
	&+ Y_3\left( \int_0^t \lambda_3(X(s)) ds\right) \left[\begin{array}{c} -1 \\ 0\end{array}\right]+ Y_4\left( \int_0^t \lambda_4(X(s)) ds\right) \left[\begin{array}{c} 0\\-1\end{array}\right],
\end{align*}
where $X_1$ counts the numbers of mRNA molecules, and $X_2$ counts the numbers of proteins.  We now let $X$ be the process with intensity functions 
\begin{align*}
	\lambda_1(x) = 2, \quad \lambda_2(x) = 10x_1, \quad \lambda_3(x) = (1/4 + 1/80)x_1, \quad \lambda_4(x) = x_2,
\end{align*}
and let $Z$ be the process with intensity functions
\begin{align*}
	\lambda_1(x) = 2, \quad \lambda_2(x) = 10x_1, \quad \lambda_3(x) = (1/4 - 1/80)x_1, \quad \lambda_4(x) = x_2.
\end{align*}
These are reasonable choices, for example, if we were attempting to estimate the sensitivity of some statistic with respect to the rate parameter for the third intensity function evaluated at $1/4$.

Let $\pi_n$ be a partition of $[0,30]$ into $n$ equally sized intervals.  In Figure \ref{fig:figure1}, we plot  numerical estimates of $Var(X_{\text{sc}}(t)  - Z_{\text{sc}}(t))$, $Var(X_{\text{crp}}(t)  - Z_{\text{crp}}(t)),$ and $Var(X^{\pi_n}_{\text{crp}}(t)  - Z^{\pi_n}_{\text{crp}}(t))$, for $n \in \{2,6,30,300\}$, over the time period $[0,30]$.  The estimates were achieved via Monte Carlo methods with 10,000 sample paths.
  \begin{figure}[t]
  	\begin{center}
	\includegraphics[scale = 0.6]{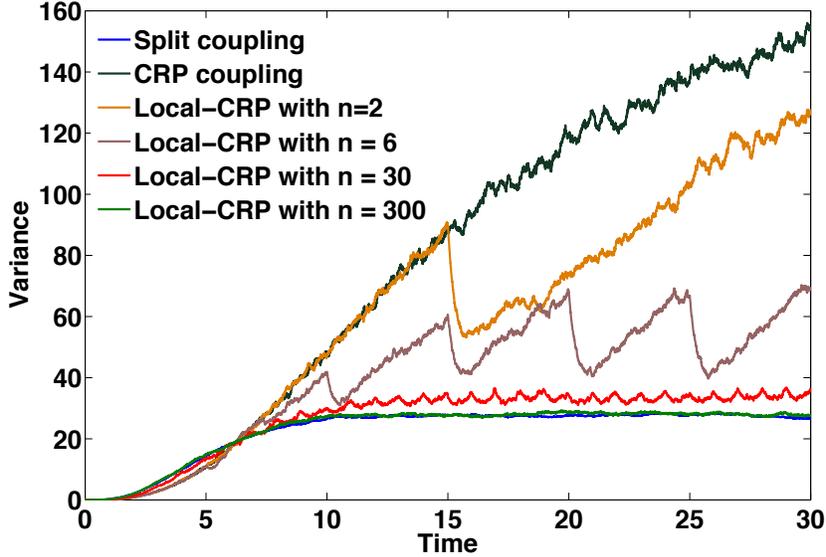}
	\end{center}
	\caption{Numerical approximations (via Monte Carlo with 10,000 sample paths) for the variance of the difference between the processes $X$ and $Z$ of Example \ref{example1} for the split coupling (blue), CRP coupling (black), and various local-CRP couplings.   Convergence of the variance of the local-CRP coupling to the variance of the split coupling is clear.} 
	\label{fig:figure1}
\end{figure}
We observe the uniform convergence of $Var(X_{\text{crp}}^{\pi_n}(\cdot)  - Z_{\text{crp}}^{\pi_n}(\cdot))$ to  $Var(X_{sc}(\cdot)  - Z_{sc}(\cdot))$ as $\text{mesh}(\pi_n) \to 0$.    We also observe a sharp drop in the variance of $X^{\pi_n}_{\text{crp}}(\cdot)  - Z_{\text{crp}}^{\pi_n}(\cdot)$  at the ``resetting'' of the Poisson processes, which occur at the end of each interval of the discretization $\pi_n$. 
	
\end{example}

\begin{example}\label{example2}
	Consider a simple quadratic birth and death model  
	\begin{align*}
		\emptyset &\rightarrow 2A\tag{r1}\\
		2A &\rightarrow \emptyset\tag{r2}
	\end{align*} 
	with initial count $X(0)$ given by a Poisson random variable with parameter $15$.   
We can model the dynamics of this system with the stochastic equations
\[
	X(t) = X(0) + 2Y_1\left(\int_0^t \lambda_1(X(s))ds\right) - 2Y_2\left(\int_0^t \lambda_2(X(s))ds\right), 
\]
where
\[
	\lambda_1(x) = 400,  \quad \text{and} \quad \lambda_2(x) = kx(x -1),
\]
and where $k$ is a parameter of the model.  We consider the model $X$ with $k = 0.1 + 1/25$ and the model $Z$ with $k = 0.1 - 1/25$.  Further, we let the initial conditions of $X$ and $Z$ be independent Poisson random variables with a parameter of 15 (that is, the initial conditions of $X$ and $Z$ are independent from each other).  Let $\pi_n$ be a partition of $[0,1]$ into $n$ equally sized intervals.  In Figure \ref{figure2}, we plot  numerical estimates of $Var(X_{\text{sc}}(t)  - Z_{\text{sc}}(t))$, $Var(X_{\text{crp}}(t)  - Z_{\text{crp}}(t)),$ and $Var(X^{\pi_n}_{\text{crp}}(t)  - Z^{\pi_n}_{\text{crp}}(t))$, for $n \in \{2,4,8,100\}$, over the time period $[0,1]$.  The estimates were achieved via Monte Carlo methods with 5,000 sample paths. We again observe the sharp drop in variance at the ``resetting'' times of the processes.  
\begin{figure}[t]
	\begin{center}
	\includegraphics[scale = 0.6]{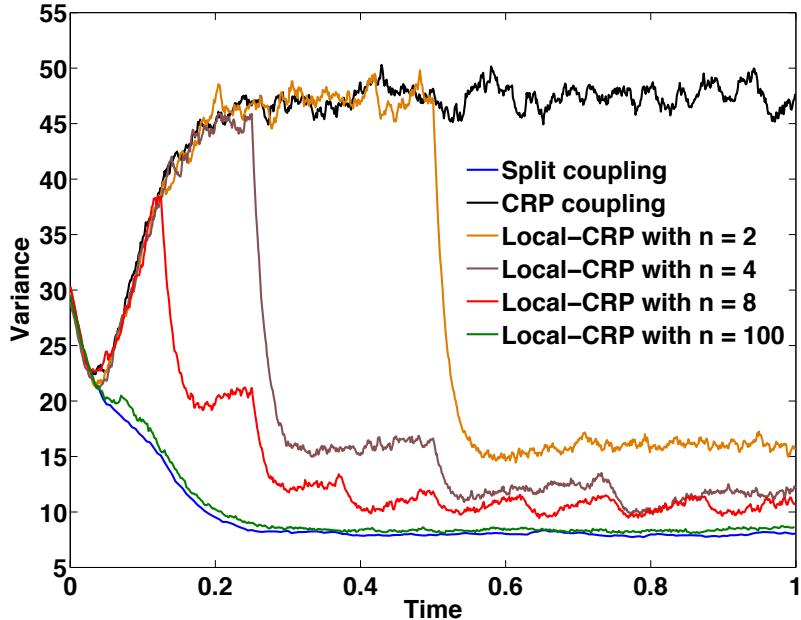}
		\caption{Numerical approximations (via Monte Carlo with 5,000 sample paths) for the variance of the difference between the processes $X$ and $Z$ of Example \ref{example2} for the split coupling (blue), CRP coupling (black), and various local-CRP couplings.   Convergence of the variance of the local-CRP coupling to the variance of the split coupling is clear.} 
	\end{center}	
	\end{figure}
	\label{figure2}
\end{example}

%

\section{Discussion}
\label{sec:discussion}

 The stochastic models finding widespread use in the cell biology literature are typically immensely complicated, and computational methods often provide the only effective way to probe the dynamics.  As Persi Diaconis recently noted \cite{Diaconis2013}, this presents  mathematicians with an opportunity to make contributions by explicitly studying the different simulation and computational algorithms themselves.  Such analyses will not only shed light on which methods to use in different contexts, but will inevitably lead to a deeper understanding of the underlying processes, and hence to better computational methods.

In this work we have clarified the connection between two couplings commonly found in the computational cell biology literature and, in particular, showed that the split coupling can be regarded as a natural limit of a localized version of the CRP coupling.  
There are other interesting ways to understand the split coupling.  For example, Arampatzis and Katsoulakis \cite{Markos} recently studied a group of couplings that is included in the family of  general split couplings considered in Theorem \ref{general}. They note that for each  test function $f$
there is an optimal choice for the function  
$r_{1k}(\lambda_k, \widetilde \lambda_k, \mathcal U, \mathcal V)(s)$ in \eqref{couple_rates}  
that minimizes the variance of the finite difference $\E[f(X_t) - f(Z_t)]$ in the setting of \eqref{eq:split_coupling}. 
When the test function is  $f(x) = x$, the correct choice of $r_{1k}$ is the one given in \eqref{couple_rates}, which yields the split coupling.   

\vspace{.2in}
\noindent\textbf{Acknowledgments.}
	  We thank Thomas Kurtz for several illuminating discussions and for reading an early version of this work.  We thank two anonymous referees for careful readings that substantially improved the clarity of Section \ref{sec:weak_conv}.  Anderson was supported by NSF grants DMS-1009275 and DMS-1318832 and Army Research Office grant W911NF-14-1-0401.  Koyama was supported by NSF grants DMS-1009275, DMS-1318832, and DMS-0805793.
\bibliographystyle{amsplain} 
\bibliography{CRP_CFD.bib}

\end{document}